\newtheorem{theorem}{Theorem}
\theoremstyle{definition}
\newtheorem{definition}{Definition}
\newtheorem{proposition}{Proposition}
\newtheorem{observation}{Observation}
\newtheorem{lemma}{Lemma}
\DeclareMathOperator{\conv}{conv}
\newcommand{\Tv}{\mathit{Tv}}
\newcommand{\Tn}{\mathit{Tn}}
\begin{document}
\title{From word-representable graphs to altered Tverberg-type theorems}

\author{D.\ Oliveros and A.J.\ Torres }

\maketitle
\begin{abstract}
Tverberg's theorem says that a set with sufficiently many points in
$\mathbb{R}^d$ can always be partitioned into $m$ parts so that the $(m-1)$-simplex is the
(nerve) intersection pattern of the convex hulls of the parts. In \cite{TverbergTipeTheorems} the authors investigate how other simplicial complexes arise as nerve complexes once we have a set with sufficiently many points. In this paper we relate the theory of word-representable graphs \cite{WordsandGraphsBook} as a way of codifying $1$-skeletons of simplicial complexes to generate nerves. In particular, we show that every $2$-word-representable triangle-free graph, every circle graph, every outerplanar graph, and every bipartite graph could be induced as a nerve complex once we have a set with sufficiently many points in $\mathbb{R}^d$ for some $d$.  
\end{abstract}

\noindent {\bf Keywords:} Tverberg's Theorem, Radon's Lemma, $k$-Word-Representable Graphs,
Geometric Set Partitions, Cyclic Polytopes, Nerve Complexes,  Order
Types, Geometric Ramsey Theory, Erd\H{o}s-Szekeres Theorems, Combinatorial
Convexity, Convex Set--Representable Complexes. 

\section{Introduction}

One of the most celebrated  and beautiful theorems in discrete geometry is due to H.\ Tverberg and states the following:


\begin{theorem}[Tverberg Theorem 1966 \cite
{tverberg1966generalization}]
Every set $S$ with at least $(d + 1)(m - 1) + 1$ points in Euclidean $d$-space $\mathbb{R}^d$ can be partitioned into $m$ parts $P = S_1 ,\dots , S_m$ such that all the convex hulls of these parts have nonempty intersections. The special case of a bi-partition $m=2$ is known as Radon's Lemma. 
\end{theorem}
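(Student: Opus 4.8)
The plan is to separate the argument into the base case $m=2$ (Radon's Lemma) and a reduction of the general case to the Colorful Carath\'eodory theorem via Sarkaria's tensor trick. For Radon's Lemma, given $d+2$ points $x_1,\dots,x_{d+2}\in\mathbb{R}^d$, I would use that they are affinely dependent: there exist scalars $\lambda_1,\dots,\lambda_{d+2}$, not all zero, with $\sum_i \lambda_i x_i = 0$ and $\sum_i \lambda_i = 0$. Setting $I=\{i:\lambda_i>0\}$ and $J=\{i:\lambda_i<0\}$ (both nonempty) and $\Lambda=\sum_{i\in I}\lambda_i=-\sum_{j\in J}\lambda_j>0$, the point $p=\sum_{i\in I}(\lambda_i/\Lambda)x_i=\sum_{j\in J}(-\lambda_j/\Lambda)x_j$ lies in $\conv\{x_i:i\in I\}\cap\conv\{x_j:j\in J\}$, giving the bipartition.

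For general $m$ I would run the tensor trick. Homogenize each point as $\hat{x}_j=(x_j,1)\in\mathbb{R}^{d+1}$ and fix vectors $v_1,\dots,v_m\in\mathbb{R}^{m-1}$ that are the vertices of a simplex centered at the origin, so their only linear relation is $\sum_{k=1}^m v_k=0$. For each of the $N=(d+1)(m-1)+1$ points form the color class $C_j=\{\hat{x}_j\otimes v_k : k=1,\dots,m\}\subset\mathbb{R}^{(d+1)(m-1)}$; since $\sum_k \hat{x}_j\otimes v_k=\hat{x}_j\otimes(\sum_k v_k)=0$, the origin lies in $\conv C_j$. Because the ambient dimension is exactly $(d+1)(m-1)=N-1$, the Colorful Carath\'eodory theorem---which I would either invoke directly or establish by a standard distance-minimization argument---applies to the $N$ classes and yields a rainbow choice $\hat{x}_j\otimes v_{k(j)}$ whose convex hull contains the origin.

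It then remains to decode the partition. The coloring $j\mapsto k(j)$ defines the parts $S_k=\{j:k(j)=k\}$, and the rainbow relation $\sum_j \alpha_j(\hat{x}_j\otimes v_{k(j)})=0$ regroups as $\sum_k w_k\otimes v_k=0$ with $w_k=\sum_{j\in S_k}\alpha_j\hat{x}_j$. Contracting against an arbitrary linear functional and using that the only dependence among the $v_k$ is $\sum_k v_k=0$ forces $w_1=\dots=w_m=:w$. The homogenizing coordinate then gives $\sum_{j\in S_k}\alpha_j=1/m>0$ for every $k$, so each part is nonempty, and rescaling $w$ by $m$ exhibits a single point $p$ lying in every $\conv\{x_j:j\in S_k\}$; hence $\conv S_1\cap\dots\cap\conv S_m\neq\emptyset$.

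The step I expect to be the main obstacle is this decoding: one must check that the single tensor identity genuinely splits into the $m$ separate affine identities $w_k=w$, which is precisely where the affine independence of the $v_k$ and the extra homogenizing coordinate do the work. Arranging the dimension count $(d+1)(m-1)=N-1$ so that Colorful Carath\'eodory is applicable with exactly $(d+1)(m-1)+1$ points is what recovers the sharp threshold asserted in the statement.
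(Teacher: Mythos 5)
Your argument is correct, but note that the paper itself offers no proof of this statement: it is quoted as a classical theorem with a citation to Tverberg's 1966 paper and used as a black box throughout. What you have written is the standard Sarkaria tensor-trick proof (as streamlined by B\'ar\'any and Onn): Radon's Lemma from affine dependence, then lifting to $\hat{x}_j=(x_j,1)$, tensoring with simplex vertices $v_1,\dots,v_m\in\mathbb{R}^{m-1}$ whose unique linear relation is $\sum_k v_k=0$, and invoking Colorful Carath\'eodory in dimension $(d+1)(m-1)$ with exactly $(d+1)(m-1)+1$ color classes. The decoding step is handled correctly: contracting $\sum_k w_k\otimes v_k=0$ against an arbitrary functional on $\mathbb{R}^{d+1}$ does force $w_1=\cdots=w_m$, the homogenizing coordinate gives $\sum_{j\in S_k}\alpha_j=1/m>0$ so every part is nonempty, and rescaling exhibits the common point of the $\conv(S_k)$. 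This is quite different from Tverberg's original 1966 argument (a delicate moving-points/general-position induction), and it is the cleanest known route to the sharp bound; its only external input is Colorful Carath\'eodory, which, as you note, can be established by an elementary distance-minimization argument. For the purposes of this paper nothing changes, since the theorem is only ever invoked, not reproved.
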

 
 Given a family $F = \{F_1 , \dots  , F_m \}$ of convex sets in $\mathbb{R}^d$, the nerve $\mathcal{N}(F)$ of $F$ is the simplicial complex with vertex set $[m]:= \{1,2,...,m \}$ whose faces are $I \subset [m]$ such that $\cap_{i\in I} F_i \neq \emptyset$ (see \cite{matousek2013lectures},\cite{tancer2013intersection} for details).

Given a finite collection of points $S\subset \mathbb{R}^d$ and an $m$-partition into $m$ color classes $P=S_1 , \dots  , S_m$ of $S$, the nerve of the partition, $\mathcal{N}(P)$ is the nerve complex $\mathcal{N}(\{\conv(S_1 ),\dots,$ $\conv(S_m )\})$, where $\conv(S_i)$ is the convex hull of the elements in the color class $i$. 

We will say that a simplicial complex $\mathcal{K}$ is \emph{partition-induced} on a finite set of points $S\subset\mathbb{R}^d$ if there exists a partition $P$ of $S$ into color classes such that the nerve of the partition is isomorphic to $\mathcal{K}$.  Then  $\mathcal{K}$ is \emph{$d$-Tverberg} if there exists a constant $\Tv(\mathcal{K}, d)$ such that for every point set $S$ with at least $\Tv(\mathcal{K}, d)$ points, there exists a partition $P$ such that $\mathcal{N}(P)=\mathcal{K}$. The minimal such constant $\Tv(\mathcal{K}, d)$ is called the \emph{Tverberg number} for $\mathcal{K}$ in dimension $d$.

We can think of Tverberg's theorem as a Ramsey-type  theorem,  where one studies how every sufficiently large system (set of points) must contain a large well-organized sub-system.  ``Sufficiently large"  means that for every set of points $S$ with at least $\Tv(\mathcal{K}, d)=(d+1)(m-1)+1$ points, there always exists a partition $P$ into $m$ color classes of $S$ such that $\mathcal{N}(P)=\mathcal{K}$, where $\mathcal{K}$ is  a simplex.  This allows us to rephrase  Tverberg's theorem as follows:

\begin{theorem}[Tverberg's theorem rephrased]
The $(m-1)$-simplex is a $d$-Tverberg complex for all $d \geq 1$, with Tverberg number $\Tv(\mathcal{K}, d)=(d+1)(m-1)+1$.
\end{theorem}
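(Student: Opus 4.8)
The plan is to recognize that requiring the nerve $\mathcal{N}(P)$ to equal the full $(m-1)$-simplex is exactly the same as requiring a Tverberg partition. Indeed, the $(m-1)$-simplex on vertex set $[m]$ contains every subset $I\subseteq[m]$ as a face; in particular it contains the top face $[m]$, which by definition of the nerve means $\bigcap_{i=1}^{m}\conv(S_i)\neq\emptyset$. Conversely, if the whole family has a common point $x$, then $x\in\bigcap_{i\in I}\conv(S_i)$ for every $I\subseteq[m]$, so every subset is a face and $\mathcal{N}(P)$ is the full simplex. Moreover, a common point forces each part to be nonempty, since $\conv(\emptyset)=\emptyset$ would make the intersection empty; hence the $m$ singletons $\{i\}$ are all faces and the nerve genuinely has $m$ vertices. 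So ``$\mathcal{N}(P)$ equals the $(m-1)$-simplex'' is equivalent to ``$S_1,\dots,S_m$ is a Tverberg partition into $m$ nonempty parts.''

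For the upper bound $\Tv \le (d+1)(m-1)+1$, I would simply invoke Tverberg's theorem (stated above): any $S$ with at least $(d+1)(m-1)+1$ points admits a partition $P=S_1,\dots,S_m$ whose convex hulls have nonempty common intersection. By the equivalence of the previous paragraph, $\mathcal{N}(P)$ is the $(m-1)$-simplex, so the $(m-1)$-simplex is $d$-Tverberg and $\Tv \le (d+1)(m-1)+1$.

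For the lower bound (minimality) $\Tv \ge (d+1)(m-1)+1$, I would exhibit a configuration of exactly $(d+1)(m-1)$ points admitting no such partition. Fix an affinely independent set $v_0,\dots,v_d$ spanning a $d$-simplex, and place $m-1$ points in a tiny ball around each $v_j$, forming clusters $C_0,\dots,C_d$ of size $m-1$, for a total of $(d+1)(m-1)$ points. Suppose toward a contradiction that some partition into $m$ nonempty parts has a common point $x$. Writing $x$ in barycentric coordinates relative to the simplex and letting $J$ be the (nonempty) set of vertices in its support, the affine independence of the $v_j$ forces each part $S_i$ to meet the cluster $C_j$ for every $j\in J$ (for a sufficiently small cluster radius this survives the perturbation away from the exact vertices). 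Since the parts are disjoint and there are $m$ of them, each such $C_j$ would have to supply $m$ distinct points, contradicting $|C_j|=m-1$. Hence no Tverberg $m$-partition exists, and $\Tv=(d+1)(m-1)+1$ exactly.

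The easy direction is the upper bound, which is nothing more than Tverberg's theorem read through the nerve dictionary, together with the observation that a common point rules out empty parts. The main obstacle is the minimality claim: one must produce an extremal point set and argue rigorously that no $m$-partition can induce the full simplex. The barycentric--pigeonhole argument above is the crux, and the only delicate point is controlling the cluster radius so that the support argument is not spoiled by the perturbation; alternatively, one may take the points to coincide exactly with the $v_j$ in the multiset setting, or simply cite the classical tightness of the Tverberg number.
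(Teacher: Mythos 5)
Your proposal is correct, and it matches what the paper does: the paper offers no separate proof of this statement, treating it as a direct translation of Tverberg's theorem through the nerve dictionary, which is exactly your upper-bound paragraph. You go further than the paper by also justifying the minimality of $(d+1)(m-1)+1$ via the classical extremal configuration ($m-1$ points clustered at each vertex of a $d$-simplex plus a pigeonhole on barycentric coordinates); that argument is sound provided you pick a coordinate bounded below by $\tfrac{1}{d+1}$ so the support survives the perturbation, and it is a worthwhile addition since the paper's definition of $\Tv(\mathcal{K},d)$ as a \emph{minimal} constant does, strictly speaking, require it.
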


In \cite{TverbergTipeTheorems}, the authors investigate other possible nerves and point out that Tverberg's theorem can be seen as a special case of a more general situation, showing some new Ramsey-Tverberg-type results. In particular, nerves such as trees and cycles are $d$-Tverberg for every $d$. In this paper we follow the same philosophy and find an interesting relationship between this problem and word-representable graphs. 

One of the most recent findings in the area of graph theory is the notion of word-representable graphs, which is a common generalization of several well-studied classes of graphs such as circle graphs, comparability graphs, $3$-colorable graphs and graphs of degree at most $3$ (also known as subcubic graphs). A graph $G = (V, E)$ is \emph{word-representable} if there exists a word $W$ over the alphabet $V$ such that letters $x$ and $y$ alternate in $W$ if and only if $\{x, y\}\in E$ for each $x\not= y$. 
 In particular, if the length of the alternation is $k$, the word is called $k$-\emph{representable}. (See Definitions \ref{defwordrepresentable} and  \ref{defkwordrepresentable} and  the book  \cite{WordsandGraphsBook} by S.\ Kitaev and V.\ Lozin,  for the state of art of this topic.) Not all graphs are word-representable, but we have observed that with a slight generalization and allowing  the word $W$ to contain a $k$-alternating subword (see Definition \ref{defgeneralwordrepresentable}), every graph is ``\emph{general} $k$-\emph{word-representable}" for some $k$; see Definition \ref{General d-words-representable graphs}. Now determining the minimum $k$ seems an interesting problem on its own. 

In this paper we observe an engaging connection between general $k$-word-representable graphs and Ramsey-Tverberg-type results, where nerve structures are shown to arise once we have sufficiently many points. 

The main philosophy for the proofs of our results relies on the following four steps.

\begin{itemize}

\item[{Step} 1)] Suppose we would like to show that a certain simplicial complex $\mathcal{K}$ with vertex set $[m]$  is $d$-Tverberg for some $d$. Assume that the $1$-skeleton of $\mathcal{K}$ is a graph that is general $r$-word-representable by some  word $W$ with  $m$ letters.   Then use $d=r-2$.

\item [{Step} 2)] Apply the famous Ramsey-type Erd\H{o}s-Szekeres result in dimension $d=2$ \cite{Erdos1987} or the multi-dimensional version of the Erd\H{o}s-Szekeres theorem (due to Gr\"{u}nbaum \cite{Gbook} and Cordovil and Duchet \cite{CordovilDuchet};  see also \cite[Chapter 9]{OMbook}, and the survey \cite{Morris}) in dimension $d\geq 2$ with sufficiently many points to ensure we have at least $m$ points  on  some curve that is combinatorially equivalent to  moment curve. 

\item[{Step} 3)] Label these $m$-vertices on this ``moment curve" according to the letters of the word $W$. 
This naturally yields a partition. Using the Radon partitions for cyclic polytopes given by M.\ Breen in \cite{breen1973primitive} we obtain the desired nerve $\mathcal{K}$ for points in convex position in $\mathbb{R}^d$. 

\item[{Step} 4)] Extend the partition to the rest of the points in $\mathbb{R}^d$ without altering the desired nerve $\mathcal{K}$.

\end{itemize}

These tools are enough to show the existence of a Tverberg number $\Tv(\Tn, d)$, but the bounds are far from tight. 

Although it is probably true, it seems a difficult problem at this point to show whether all four steps work for all simplicial complexes. In this paper we are able to prove, among other things, the following  theorems.

\begin{theorem}\label{mainTheo1}
Every general $2$-word-representable triangle-free graph $G=(V,E)$  is $2$-Tverberg.
\end{theorem}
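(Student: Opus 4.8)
The plan is to build a planar dictionary between the combinatorics of the representing word and the intersection pattern of convex hulls of points in convex position, to transport the graph onto a large convex-position subset supplied by Erd\H{o}s--Szekeres, and finally to absorb the remaining points without disturbing the nerve.

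First I would isolate the geometric fact underlying Step 3 in dimension $d=2$. If a finite planar set in general position lies in convex position and is split into classes $S_1,\dots,S_m$, then $\conv(S_i)\cap\conv(S_j)\neq\emptyset$ exactly when the two classes \emph{interleave} along the convex-position order, i.e.\ when there are $a,b\in S_i$ and $c,d\in S_j$ occurring cyclically as $a,c,b,d$; otherwise a chord separates them. Reading the convex-position order as a word $W$ over $V$ (the $t$-th point carrying the $t$-th letter of $W$), this interleaving is precisely the presence of a $2$-alternating subword $xyxy$ or $yxyx$, which by hypothesis holds iff $\{x,y\}\in E$. Hence for points in convex position labelled by $W$ the $1$-skeleton of the partition nerve is exactly $G$, and the witnessing configuration for each edge is a crossing pair of chords, i.e.\ a planar Radon partition of the $d+2=4$ interleaving points, which is the concrete instance of Breen's description of Radon partitions of cyclic polytopes. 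Because $G$ is triangle-free, every triple of classes contains a non-edge and hence a disjoint pair, so no $2$-face (and a fortiori no higher face) can appear; thus the nerve equals $G$. This is exactly why the target dimension is $d=2$.

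For Steps 1--2 I would fix a word $W$ of length $L$ that general $2$-word-represents $G$ and set the Tverberg number at least the planar Erd\H{o}s--Szekeres value $\mathrm{ES}(L)$ (degenerate sets being handled by a symbolic perturbation creating no new intersections). Any admissible $S$ then contains $L$ points $q_1,\dots,q_L$ in convex position, and labelling $q_t$ by the $t$-th letter of $W$ realizes $G$ as the nerve on $Q=\{q_1,\dots,q_L\}$ by the paragraph above.

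The remaining, and I expect genuinely hard, part is Step 4: colouring $R=S\setminus Q$ without changing the nerve. The useful observation is that enlarging a class only enlarges its hull, so every edge of $G$ is automatically preserved and the sole danger is manufacturing a \emph{new} edge; since $G$ is triangle-free, forbidding new edges also forbids any new $2$-face, so Step 4 reduces to the purely combinatorial-geometric task of distributing $R$ among the $m$ classes so that each non-adjacent pair keeps disjoint hulls. For each non-edge $\{x,y\}$ I would fix a separating line; the points that may safely receive colour $x$ then form the convex region $R_x$ carved out by the $x$-sides of all these lines, and it suffices to colour each $p\in R$ by some $x$ with $p\in R_x$, provided the $R_x$ cover $R$. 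When $\conv(S)$ has at least $L$ vertices I would take $Q$ among them (padding $W$ to the needed length by the nerve-preserving operation of duplicating a letter adjacent to an existing copy, which alters no alternation), so that all of $R$ lies inside $\conv(Q)$ and only interior routing remains; in general $R$ may also contain points outside $\conv(Q)$, and absorbing these while respecting every separation is the additional difficulty. Verifying that the regions $R_x$ can always be made to cover $R$---the step where the triangle-free structure of $G$ must supply the needed slack---is the crux of the argument.
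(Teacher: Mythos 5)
Your Steps 1--3 are sound and coincide with the paper's route: the equivalence between $2$-alternation in the word and interleaving of colour classes along the convex-position order (the planar case of Breen's theorem, packaged in the paper as the correspondence with polygon-circle graphs), the use of Erd\H{o}s--Szekeres to locate the convex-position subset $Q$, and the observation that triangle-freeness kills all faces of dimension at least $2$. The problem is that you stop exactly where the real work begins. Step 4 --- colouring $R=S\setminus Q$ without creating a new edge --- is not a verification you may defer: you yourself flag the covering of $R$ by the regions $R_x$ as ``the crux,'' and it genuinely fails for a naive choice of separating lines. Take three pairwise non-adjacent colour classes occupying three short disjoint arcs of the convex position; if each separating line is a chord cutting off its class tightly, a point near the centre of the configuration lies in none of the regions $R_x$ and is uncolourable. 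Choosing the separating lines so that the regions cover the plane, consistently for all colours at once, is the actual content of the theorem, and your proposal offers no argument for it.

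The paper resolves this in Theorem \ref{2-extension} by induction on the number $m$ of colour classes. For each class $S_i$ and each supporting line $h_{ij}$ of $\conv(S_i)$ it records the set $\mathcal{C}_{ij}^{c}$ of colours whose classes lie on the $S_i$-side of $h_{ij}$ and have hulls disjoint from $\conv(S_i)$; an extremal argument (minimising $|\mathcal{C}_{ij}^{c}|$ and using convex position to exhibit, whenever the minimum is nonzero, a class strictly further out with a smaller such set) shows that some colour $c_i$ admits a supporting line with $\mathcal{C}_{ij}^{c}=\emptyset$. One then deletes that colour, extends the remaining $m-1$ colours by induction, and assigns colour $c_i$ to every uncoloured point of the half-plane $H_{ij}^{+}$, which is safe precisely because every class confined to that half-plane already meets $\conv(S_i)$. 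Without a substitute for this inductive extension, your argument establishes only that $G$ is partition-induced on point sets in convex position, not that it is $2$-Tverberg.
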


\begin{theorem}\label{maintheo2circlegraph}
Every circle graph is $2$-Tverberg.
\end{theorem}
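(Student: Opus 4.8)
The plan is to realize the circle graph directly from its chord diagram, exploiting the classical fact (see \cite{WordsandGraphsBook}) that a graph is a circle graph if and only if it is representable by a $2$-uniform word, i.e.\ a word $W$ over its vertex set in which every vertex occurs exactly twice and in which two vertices $x,y$ are adjacent precisely when their occurrences alternate, $\dots xyxy\dots$. First I would fix such a word $W$ for the given circle graph $G=(V,E)$ with $|V|=m$; it has length $2m$. Since an edge corresponds here to an alternation of length $4$, Step~1 of the general scheme places us, via $d=r-2$, in dimension $d=2$, which is exactly the dimension in which a color class of two points becomes a segment and two such segments can meet.

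Next, following Steps~2 and~3, given a set $S\subset\mathbb{R}^2$ with sufficiently many points I would invoke the planar Erd\H{o}s--Szekeres theorem \cite{Erdos1987} to extract $2m$ points in convex position, list them as $q_1,\dots,q_{2m}$ in their cyclic order along the convex curve, and label them by the letters of $W$. Taking color class $i$ to be the two points labelled by the $i$-th vertex, its convex hull is the chord $c_i=\conv(S_i)$. In dimension $d=2$ Breen's description of the primitive Radon partitions of a cyclic polytope \cite{breen1973primitive} specializes to the elementary statement that four points in convex position have a unique Radon partition, namely the two crossing diagonals; consequently $\conv(S_i)\cap\conv(S_j)\neq\emptyset$ if and only if the four endpoints of $c_i$ and $c_j$ interleave along the curve, which happens exactly when $i$ and $j$ alternate in $W$, i.e.\ when $ij\in E$. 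Thus on the skeleton the $1$-skeleton of the nerve is precisely $G$.

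It remains to check that there are no higher faces and to carry out Step~4. For the faces, assuming $S$ is in general position so that no three of the chords $c_i$ are concurrent (a generic condition that can be secured by choosing the $2m$ skeleton points in sufficiently general convex position inside the large Erd\H{o}s--Szekeres subset), no point lies in three of the hulls, so the nerve has no $2$-faces and equals $G$ exactly. For the extension, I would process the remaining points of $S$ one at a time, assigning each point $p$ to a class $i$ for which $\conv(S_i\cup\{p\})$ meets exactly the same hulls as $\conv(S_i)$ did: if $p$ already lies in some current hull it is added there for free, and otherwise it is routed to a class whose chord shields it, keeping the enlarged (now polygonal) hull inside a controlled neighborhood of the chord arrangement so that no disjoint pair becomes crossing and no crossing pair becomes disjoint.

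The hard part will be precisely this last step. The clean correspondence above only controls the $2m$ skeleton points, and an arbitrary $S$ may have many points lying far outside the polygon spanned by the skeleton; adding such a point to a class can enlarge its hull across several other chords and corrupt the nerve. I expect to resolve this with an extension lemma that first chooses the skeleton among the outer convex layers of $S$ so that the leftover points fall into bounded cells of the chord arrangement, and then assigns each leftover point to a class incident to its cell, using the generic non-concurrency to keep every triple intersection empty. Guaranteeing simultaneously, for all leftover points at once, that every non-edge stays a non-edge is the delicate point; securing non-concurrency for adversarial (e.g.\ highly symmetric) configurations $S$ is the secondary technical nuisance, to be handled by the freedom in selecting the skeleton.
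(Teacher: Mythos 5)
Your realization step is essentially the paper's: the paper also passes through the chord diagram of the circle graph on a large convex-position subset (Observation \ref{cord_arragement}, together with Lemma \ref{l4} and Proposition \ref{Polygongraphis2-representable}, which make your $2$-uniform word and the chord arrangement two descriptions of the same object), and the absence of $2$-faces is likewise secured by choosing the chords with no triple intersections. Up to that point your argument is correct and matches the paper.

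The genuine gap is the extension step, and you have correctly identified it but not closed it. The paper does not extend the coloring greedily, point by point; it proves a standalone result, Theorem \ref{2-extension}, stating that \emph{any} simplicial complex partition-induced on points in convex position in the plane is $2$-Tverberg, by induction on the number $m$ of color classes: for each class $S_i$ and each supporting line $h_{ij}$ of $\conv(S_i)$ one counts the classes lying in the half-plane $H_{ij}^+$ on the $S_i$ side that are disjoint from $\conv(S_i)$; a minimality argument shows some class admits a supporting line with no such ``obstructing'' classes, whereupon one deletes that class, applies induction to the remaining $m-1$ classes, and then floods the whole half-plane $H_{ij}^+$ with the deleted color. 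Your proposed substitute would fail: if the ambient set $\bar S$ is itself in convex position, every leftover point lies \emph{outside} the convex hull of the chosen $2m$-point skeleton, so no choice of ``outer convex layers'' confines the leftovers to bounded cells of the chord arrangement, and adding a far-away point to a class sweeps its hull across many other chords, creating new intersections. To complete the proof you need a global half-plane argument of the kind in Theorem \ref{2-extension}, not a per-point assignment.
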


These two theorems imply that graphs such as outerplanar graphs are $2$-Tverberg, and prove that trees and cycles are $2$-Tverberg. 
 
\begin{theorem}\label{sin_triangulos_encaje_Md}
Every triangle free graph $G=(V,E)$ is partition induced on a sufficiently large set of points in convex position for every integer $d \geq d_0$ for some $d_0\geq 1$.
\end{theorem}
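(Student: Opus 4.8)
My plan is to reduce the statement to a purely combinatorial claim about alternations of letters in a word, settle that claim, and then read the resulting configuration directly off the moment curve. Fix a triangle-free graph $G=(V,E)$ with $V=\{1,\dots,m\}$, and consider the moment curve $\gamma(t)=(t,t^2,\dots,t^d)\subset\mathbb{R}^d$; any finite set of points on $\gamma$ is in convex position, being the vertex set of a cyclic polytope. Since we only need \emph{existence} of a good point set (not a statement about arbitrary large sets), there is no need to invoke Erd\H{o}s--Szekeres here: I place points on $\gamma$ by hand. Given a word $W$ over the alphabet $V$, I assign its $\ell$-th letter to the point $\gamma(t_\ell)$ for parameters $t_1<\dots<t_{|W|}$, obtaining a partition of a convex-position point set into color classes $S_1,\dots,S_m$. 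By Breen's description of the primitive Radon partitions of cyclic polytopes \cite{breen1973primitive}, the unique Radon partition of any $d+2$ points on $\gamma$ alternates the two parts along the curve; consequently, for disjoint classes $S_i,S_j$ one has $\conv(S_i)\cap\conv(S_j)\neq\emptyset$ if and only if the subword of $W$ on the letters $i,j$ admits an alternating subsequence of length at least $d+2$, i.e. the pair $\{i,j\}$ is $(d+2)$-alternating in $W$. This is exactly the notion of a general $(d+2)$-word-representation applied to a single pair.

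Before treating dimensions I record why triangle-freeness makes this pairwise information sufficient. A set $I\subset V$ is a face of $\mathcal N(P)$ iff $\bigcap_{i\in I}\conv(S_i)\neq\emptyset$; for $|I|\ge 3$ this forces every pair inside $I$ to have intersecting hulls, so $I$ spans a clique and in particular a triangle. As $G$ is triangle-free, no such face can occur, so $\mathcal N(P)$ has faces only in dimensions $0$ and $1$. Hence it suffices to engineer the edges: I must produce, for each $d$, a word $W_d$ over $V$ in which $i$ and $j$ are $(d+2)$-alternating \emph{precisely} when $\{i,j\}\in E$. In the language of the introduction this is a general $(d+2)$-word-representation of $G$, so the theorem reduces to the assertion that $G$ is general $k$-word-representable for every sufficiently large $k$.

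For the base case I invoke the observation recorded in the introduction that every graph---in particular every triangle-free $G$---is general $k_0$-word-representable for some $k_0$; fix such a word $W_0$ and assume, without loss of generality, that every letter of $V$ occurs in it. Setting $d_0:=k_0-2$ handles one dimension. The remaining and principal point is an amplification step that promotes a representation at threshold $k$ to one at threshold $k+1$, so that iterating reaches every $k\ge k_0$. The key claim is that prepending to $W$ one copy of each letter, listed in the \emph{reverse} of the order in which the letters first occur in $W$, raises the alternation length of every pair $\{i,j\}$ by exactly one. Indeed, write the $\{i,j\}$-subword of $W$ as a sequence of maximal blocks (its number of blocks is the alternation length), and let $f$ be whichever of $i,j$ occurs first in $W$. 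The chosen prefix order places the other letter as a single new leading block and places $f$ immediately before $W$'s first block, which is itself an $f$-block; the two $f$-occurrences merge, so the block count increases by exactly one, uniformly over all pairs. Applying this operation $t$ times to $W_0$ therefore adds $t$ to every alternation length: edges move from $\ge k_0$ to $\ge k_0+t$ and non-edges from $\le k_0-1$ to $\le k_0-1+t$, so the resulting word is a general $(k_0+t)$-word-representation of the same graph $G$.

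Combining the pieces, given any $d\ge d_0$ I set $t=d-d_0\ge 0$ and let $W_d$ be the word obtained from $W_0$ by $t$ amplifications, a general $(d+2)$-word-representation of $G$. Labelling $|W_d|$ points of the moment curve in $\mathbb{R}^d$ by $W_d$ gives a partition $P$ of a set in convex position whose nerve has edge set exactly $E$ and, by the triangle-free argument above, no higher faces; hence $\mathcal N(P)\cong G$ and $G$ is partition-induced, as claimed. I expect the only delicate point to be the amplification claim: one must verify that the reverse-first-occurrence prefix really yields a uniform $+1$ for \emph{every} pair simultaneously, and that any vertices of low multiplicity (in particular isolated vertices, which I would ensure appear exactly once) create no unwanted alternations as the threshold grows. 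The entire passage from a single dimension to all $d\ge d_0$ rests on this uniform increment, so that is where I would concentrate the rigor.
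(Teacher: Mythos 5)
Your construction is essentially the paper's: represent $G$ by a word, read the word off as a coloring of points on the moment curve, apply Breen's characterization of Radon partitions of cyclic polytopes to detect exactly the edges, and use triangle-freeness to rule out all faces of dimension $\geq 2$. Where you depart is in how you pass from one threshold to all larger ones: the paper simply cites the proposition that general $d$-word-representability implies general $(d+1)$-word-representability (deferring its proof to the literature), whereas you prove it by hand with the reverse-first-occurrence prefix. Your ``$+1$'' claim is correct: if $f$ is whichever of $\{i,j\}$ occurs first in $W$, the prefix contributes the two letters in the order $g,f$, and the $f$ merges with the leading $f$-block of the old $\{i,j\}$-subword, so every pair's block count rises by exactly one, uniformly. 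This is a clean, self-contained substitute for the cited proposition, and your handling of isolated vertices (force every letter to occur at least once before amplifying) closes the only loose end.

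The one substantive difference is scope. You read ``partition induced on a sufficiently large set of points in convex position'' existentially and stop once the nerve is realized on the moment curve itself. The paper reads it universally: its proof ends by transporting the partition from the moment-curve set $S_1$ to an \emph{arbitrary} convex-position set $S$ of the same size via an order-type-preserving bijection (Lemma 2 of \cite{TverbergTipeTheorems}), and it is this transported version that is used later (e.g., in the proof of Theorem \ref{maintheo3bipartition}, after Erd\H{o}s--Szekeres produces a set combinatorially equivalent to one on the moment curve). If the intended statement is the universal one, your argument is missing that final transfer. For point sets combinatorially equivalent to the moment curve the transfer is immediate and your proof goes through verbatim; for a general convex-position set in $\mathbb{R}^d$ with $d\geq 3$ it does not, and indeed the paper's own justification (``any two sets in convex position have the same order type'') holds only in the plane. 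So: same skeleton, a more explicit amplification step, and an omitted final step whose necessity --- and whose validity in the paper itself --- hinges on how the theorem is quantified.
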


\begin{theorem}\label{maintheo3bipartition}
Every bipartite graph is $d$-Tverberg for some $d$. 
\end{theorem}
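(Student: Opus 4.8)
The plan is to obtain Theorem~\ref{maintheo3bipartition} by promoting the convex-position realization of Theorem~\ref{sin_triangulos_encaje_Md} to a genuine Tverberg statement, using the bipartite structure to carry out the extension Step~4. First I would record that a bipartite graph $G=(V,E)$ with $|V|=m$ is in particular triangle-free, so Theorem~\ref{sin_triangulos_encaje_Md} supplies a dimension $d_0\ge 1$ such that for every $d\ge d_0$ the graph $G$ is partition-induced on \emph{some} sufficiently large point set in convex position in $\mathbb{R}^d$. Fix one such $d$. What remains is exactly the gap between ``induced on one convex configuration'' and ``$d$-Tverberg'': I must show that on \emph{every} point set $S\subset\mathbb{R}^d$ with enough points there is a partition whose nerve is $G$.

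To produce the required convex configuration inside an arbitrary $S$, I would invoke Step~2: apply the multidimensional Erd\H{o}s--Szekeres theorem (Gr\"unbaum~\cite{Gbook}, Cordovil--Duchet~\cite{CordovilDuchet}) to guarantee that once $|S|$ exceeds a threshold depending only on $m$ and $d$, the set $S$ contains a subset $S'$ of any prescribed size that is in convex position and has the order type of the moment curve, i.e.\ is combinatorially a cyclic polytope. On $S'$ the realization of Theorem~\ref{sin_triangulos_encaje_Md} applies: labeling the points of $S'$ by the letters of a word representing $G$ and reading off the Radon partitions of cyclic polytopes described by Breen~\cite{breen1973primitive} (Step~3) yields an $m$-partition of $S'$ whose nerve is $G$.

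The substantive part is Step~4, the extension of this partition from $S'$ to all of $S$ without disturbing the nerve. Since enlarging a color class only enlarges its convex hull, every edge of $G$ that is already present survives; the only risk is the creation of a \emph{new} pairwise intersection (a spurious edge), and since $G$ is triangle-free, preventing spurious edges automatically rules out any triple common intersection (a $2$-face the nerve must not contain). I would assign each leftover point of $S\setminus S'$ greedily to a color class so as to keep it on the prescribed side of the separating data that encode the non-adjacencies of $G$. It is here that bipartiteness, rather than mere triangle-freeness, is used: a fixed $2$-coloring of $G$ lets me place the two color parts in separated arcs along the moment curve and absorb the extra points into the interiors of existing hulls so that no point ever bridges two classes of opposite parts that are meant to be non-adjacent.

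Accordingly, the main obstacle I expect is precisely this controlled absorption in Step~4: proving that the greedy assignment can always be completed so that the nerve remains exactly $G$, gaining neither a spurious edge nor a triple intersection. Once that is verified, the threshold produced by Step~2 together with the (non-tight) bookkeeping of leftover points gives a finite constant $\Tv(G,d)$, and hence $G$ is $d$-Tverberg, as claimed.
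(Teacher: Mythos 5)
Your proposal reproduces the paper's four-step architecture faithfully (triangle-freeness plus Theorem~\ref{sin_triangulos_encaje_Md} for the convex-position realization, multidimensional Erd\H{o}s--Szekeres plus the order-type transfer for Step~2--3), but it leaves the decisive part of the argument unproven. You explicitly write that the ``main obstacle'' is verifying that the greedy absorption in Step~4 can always be completed, and then say ``once that is verified'' the theorem follows. That obstacle \emph{is} the theorem: everything up to that point already appears in Theorem~\ref{sin_triangulos_encaje_Md} and Theorem~\ref{bipartiteinduce}, and what separates ``partition-induced on one cyclic-polytope configuration'' from ``$d$-Tverberg'' is precisely the extension lemma. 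The paper devotes Theorem~\ref{TodaGsepuede} and Lemma~\ref{extendablebiparite} to this: it builds a very specific word $W=W_1W_2\cdots W_d$ in which each $v_i\in V$ is confined to the single factor $W_i$ and each $u_j\in U$ occurs in boustrophedon order across the factors, inserts $d$ extra ``separator'' points on the moment curve, and uses Gale's evenness condition (Lemma~\ref{separation}) to cut $\mathbb{R}^d$ into $m$ convex regions $R_1,\dots,R_m$, one per vertex of $U$, so that every leftover point of $\bar S$ in $R_j$ can be recolored $u_j$ without creating a spurious intersection. None of that machinery, nor any substitute for it, appears in your proposal.

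Moreover, the heuristic you offer for why bipartiteness helps --- placing ``the two color parts in separated arcs along the moment curve'' and absorbing extra points ``into the interiors of existing hulls'' --- does not match what actually happens and is not obviously repairable. In the paper's word the letters of $V$ and $U$ are heavily interleaved (each factor $F_i(u_j)$ is an alternating $v_iu_jv_iu_j\cdots$ block of length $d+2$, which is forced by Breen's criterion for $\conv$-intersection on the cyclic polytope), so the two parts are not in separated arcs; and the leftover points are not absorbed into existing hulls but are assigned to the $U$-classes region by region, which genuinely enlarges those hulls and requires the separating-hyperplane argument to certify that no new pairwise intersection arises. As written, your proof is a correct statement of the strategy with the load-bearing lemma missing.
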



In Section \ref{General d-words-representable graphs} we review the concept of word-representable and $k$-word-representable graphs, and give  a generalization of these concepts which helps us to codify every given  triangle-free graph $G$. We further give examples of graphs that are planar, bipartite and not general $2$-word-representable graphs, thus not $2$-Tverberg.

In Section \ref{nerves over cyclic polytopes} we show that every triangle-free general $d$-word-representable graph is partition-induced on a set of points in convex position in dimension $d$. In Section \ref{Tverberg type theorems and extensions} we  show that
if a simplicial complex is partition-induced on a set of points in convex position in the plane, then it is  $2$-Tverberg (Theorem \ref{2-extension}), which proves in particular Theorems \ref{mainTheo1} and \ref{maintheo2circlegraph}. Finally, in the last part of this section we will prove Theorem \ref{maintheo3bipartition} using an specific new codification in $d$-word-representable graphs and  Radon partitions on cyclic polytopes.

\section{Word-representable graphs} \label{General d-words-representable graphs}

\subsection{Word-representable and \emph{k}-word-representable graphs}

A \emph{word} $W$ in an alphabet $A(W)$ is simply a finite sequence of letters $a_1a_2\dots a_r$ with $a_i\in A(W)$.  A \emph{sub-word}  $W^{\prime}$ of $W$ is a subsequence of $W$, and a word $\overline{W}$ is a \emph{factor} of $W$ if $W = W_1\overline{W}W_2$ for possibly empty words $W_1$ and $W_2$. We denote the letter of $W$ in position $i$ by $W(i)$ for $1\leq i \leq |W|$. 

The concepts of word-representable graphs  and $k$-word-representable graphs were introduced by Kitaev in \cite{Kitaev} and are defined as follows.

\begin{definition}\label{defwordrepresentable}
A graph $G=(V,E)$ is \emph{word-representable} if there exists a word $W$ over the alphabet $V$ such that two letters $x$ and $y$ alternate in $W$ if and only if $\{x,y\} \in E$. We say that $W$ represents $G$, and $W$ is called a word-representant for $G$.
\end{definition} 


\begin{definition}\label{defkwordrepresentable}
A graph $G$ is \emph{$k$-word-representable} if there exists a word $W$ such that $W$ is a word-representant for $G$ and every letter appears in $W$ exactly $k$ times. The minimal $k$ such that a graph is $k$-word-representable is called the graph's representation number, and it is denoted by $\mathcal{R}(G)$. Also, $\mathcal{R}_k = \{G | \mathcal{R}(G) = k\}$.
\end{definition}


There are many interesting results and contributions in word-representable and $k$-word-representable graphs (see \cite{WordsandGraphsBook} for an excellent treatment of the theory). Below we will list some of them that we will use in this work.

\begin{lemma}[\cite{kitaev2008representable}]\label{l1}
A graph $G$ is word-representable if and only if it is $k$-word-representable for some $k$.
\end{lemma}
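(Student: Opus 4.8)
The plan is to prove the two implications separately; the backward one is immediate, and the forward one needs a padding construction. For the easy direction, if $G$ is $k$-word-representable then by Definition~\ref{defkwordrepresentable} there is a word $W$ over $V$ in which every letter occurs exactly $k$ times and in which $x$ and $y$ alternate if and only if $\{x,y\}\in E$; this is exactly what Definition~\ref{defwordrepresentable} requires of a word-representant, so the same $W$ witnesses that $G$ is word-representable and there is nothing to do.

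For the forward direction I would start from an arbitrary word-representant $W$ of $G$, write $n_x$ for the number of occurrences of a letter $x$ in $W$, and set $k=\max_{x\in V} n_x$; the goal is to turn $W$ into a representant of $G$ in which every letter occurs exactly $k$ times, without altering which pairs of letters alternate. Call a letter \emph{deficient} if $n_x<k$. The engine of the argument is a \emph{safe-insertion} observation: appending one new copy of a letter $x$ at the very end of $W$ does not change the represented graph as long as every neighbour of $x$ has an occurrence to the right of the last occurrence of $x$. Indeed, for each edge $\{x,y\}$ the restricted subword then ends in $y$, so appending an $x$ extends it by one alternating step and it stays alternating; for a non-edge $\{x,z\}$ the restricted subword already contains two equal adjacent letters, and appending a letter cannot remove such a defect, so it stays non-alternating. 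A mirror-image statement holds for prepending a copy of $x$ at the front when every neighbour of $x$ has an occurrence to the left of the first occurrence of $x$. Granting that one can always find a deficient letter admitting such a safe insertion, I would reduce the total deficiency $\sum_{x\in V}(k-n_x)$ by one at each step and iterate until the word is $k$-uniform, at which point Definition~\ref{defkwordrepresentable} is met and $G$ is $k$-word-representable.

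The step I expect to be the main obstacle is exactly this progress guarantee: that whenever the word is not yet uniform, \emph{some deficient} letter is safe to insert on one of the two sides. The two cheap extremal choices do not suffice, since the letter with the latest first occurrence (always safe to prepend) and the letter with the earliest last occurrence (always safe to append) may both be saturated, leaving only interior deficient letters that are blocked on each side by saturated neighbours. I would attack this by examining the first-occurrence and last-occurrence orders simultaneously: among the deficient letters pick the one with the earliest last occurrence, observe that any neighbour obstructing a safe append must itself be saturated and finish even earlier, and then run a minimality/exchange argument to locate a deficient letter that is safe on at least one side; should a direct argument prove awkward, a useful preliminary normalisation is to prepend to $W$ the word that lists $V$ in order of first occurrence, which preserves the represented graph and raises every multiplicity by one, thereby giving more room to reposition the obstructing occurrences. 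Once the progress lemma is secured, the induction on total deficiency closes at once.
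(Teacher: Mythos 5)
The paper offers no proof of this lemma: it is quoted verbatim from \cite{kitaev2008representable}, so there is no in-paper argument to measure your proposal against, and I can only judge it on its own terms. Your backward direction is fine, and your safe-insertion observation is correct as stated: appending a copy of $x$ at the end of $W$ when every neighbour of $x$ occurs to the right of the last $x$ extends each edge's alternation by one step, and it cannot repair the adjacent-repetition defect that witnesses a non-edge.

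The genuine gap is the progress guarantee, which you explicitly leave open, and the one concrete claim you offer towards it points in the wrong direction: you assert that a neighbour obstructing a safe append of a deficient letter ``must itself be saturated.'' The opposite is true, and that is precisely what closes the argument. If $\{x,y\}\in E$ and the last occurrence of $y$ precedes the last occurrence of $x$, then the restriction of $W$ to $\{x,y\}$ is an alternating word ending in $x$, hence of the form $xyx\cdots x$ or $yxy\cdots x$, so $n_y\le n_x<k$: every right-obstructor of a deficient letter is itself deficient, with a strictly earlier last occurrence. Consequently the deficient letter whose last occurrence is earliest is never obstructed and is safe to append; each such append lowers the total deficiency $\sum_{x\in V}(k-n_x)$ by one while leaving $k$ unchanged, so your induction terminates with no exchange argument, no use of the prepending variant, and no normalisation by the initial permutation. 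As submitted, however, the proposal is not a proof: its pivotal step is both unproven and mis-stated, even though the repair is a one-line counting observation.
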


\begin{lemma}[\cite{kitaev2008representable}]\label{l2}
A $k$-word-representable graph $G$ is also $(k + 1)$-word-representable.
\end{lemma}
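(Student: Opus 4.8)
The plan is to start from a word $W$ that $k$-represents $G$ — so $W$ is $k$-uniform (each letter of $V$ occurs exactly $k$ times) and two letters alternate in $W$ precisely when they are adjacent — and to manufacture from it a new word that is $(k+1)$-uniform and still represents $G$. The natural device is to prepend to $W$ a single copy of each letter, arranged so that no alternation is disturbed. Concretely, let $\pi$ denote the \emph{initial permutation} of $W$: the permutation of $V$ obtained by listing each letter exactly once, in the order in which it first appears in $W$. I claim that the concatenation $\pi W$ is the desired word.

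The bookkeeping is immediate: $\pi$ contributes one occurrence of every letter and $W$ contributes $k$, so in $\pi W$ every letter occurs exactly $k+1$ times, and $\pi W$ is $(k+1)$-uniform. It remains to check that for every pair $x\neq y$ the letters $x$ and $y$ alternate in $\pi W$ if and only if $\{x,y\}\in E$, which by hypothesis on $W$ is the same as asking that $x,y$ alternate in $W$. So I would reduce the whole lemma to the single equivalence: $x$ and $y$ alternate in $\pi W$ if and only if they alternate in $W$.

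For one direction, observe that $W$ is a factor (indeed a suffix) of $\pi W$; hence any two consecutive equal letters occurring in the restriction of $W$ to $\{x,y\}$ survive in the restriction of $\pi W$ to $\{x,y\}$. Thus if $x,y$ fail to alternate in $W$ they also fail to alternate in $\pi W$. For the converse, suppose $x,y$ alternate in $W$ and, without loss of generality, that the first occurrence of $x$ precedes that of $y$; then the restriction of $W$ to $\{x,y\}$ is the alternating block $xyxy\cdots xy$ of length $2k$. By the very definition of the initial permutation, $x$ precedes $y$ in $\pi$, so the restriction of $\pi$ to $\{x,y\}$ is $xy$. Concatenating, the restriction of $\pi W$ to $\{x,y\}$ is $xy\cdot xyxy\cdots xy = xyxy\cdots xy$ of length $2(k+1)$, which is again alternating. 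This establishes the equivalence, and with it the lemma.

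The argument is short, and the only point demanding care is the converse direction: one must verify that the relative order of $x$ and $y$ within the prepended permutation matches the start of their alternating block in $W$, so that the junction between $\pi$ and $W$ does not create a repeated letter. This is exactly what choosing $\pi$ to be the order of first occurrences guarantees, so I anticipate no genuine obstacle. A fully symmetric variant — appending the permutation of \emph{last} occurrences rather than prepending that of first occurrences — would serve equally well.
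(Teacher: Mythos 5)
Your proof is correct, and it is exactly the standard argument from the cited source (Kitaev--Pyatkin): prepend the initial permutation $\pi$ of $W$ and check that the restriction to any pair $\{x,y\}$ either keeps its repeated adjacent letter or extends $(xy)^k$ to $(xy)^{k+1}$. The paper itself states this lemma without proof, simply citing \cite{kitaev2008representable}, so there is nothing in the text to diverge from.
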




\begin{lemma}[\cite{halldorsson2011alternation}]\label{l4}
For a graph $G$ different from a complete graph, $G$ is $2$-word-representable if and only if $G$ is a circle graph (intersection graph of a set of chords with vertices on a circle).
\end{lemma}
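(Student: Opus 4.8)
The plan is to establish a dictionary between words in which every letter occurs exactly twice and chord diagrams on a circle, under which the alternation of two letters corresponds exactly to the crossing of two chords. Concretely, suppose $W = w_1 w_2 \cdots w_{2n}$ over the alphabet $V = \{v_1,\dots,v_n\}$ is a word in which each $v_i$ occurs exactly twice. I would place $2n$ distinct points $p_1,\dots,p_{2n}$ in this cyclic order on a circle $C$, and for each letter $v$ occurring at the two positions $i<j$ draw the chord $c_v = \overline{p_i p_j}$; this yields $n$ chords whose $2n$ endpoints are pairwise distinct. The single geometric fact I would invoke is the classical one that two chords of a circle meet in the interior of the disk if and only if their four endpoints interleave around $C$. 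For two letters $x,y$ the interleaving of the endpoints of $c_x$ and $c_y$ is, by construction, exactly the statement that the subword of $W$ on $\{x,y\}$ has the form $xyxy$ or $yxyx$, i.e.\ that $x$ and $y$ alternate in $W$.

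First I would check that linearizing the cyclic order (reading $W$ starting from $p_1$) does not corrupt this correspondence: crossing depends only on the cyclic order of the four endpoints, and each of the two crossing patterns ($xyxy$, $yxyx$) linearizes to an alternating subword, while each of the non-crossing patterns ($xxyy$, $xyyx$ up to rotation) linearizes to a non-alternating one. Granting this, the dictionary gives both implications at once. If $G$ is $2$-word-representable by $W$, then the chords $c_v$ form a diagram in which $c_x$ and $c_y$ cross iff $x,y$ alternate in $W$ iff $\{x,y\}\in E$; hence $G$ is the intersection graph of these chords, i.e.\ a circle graph. Conversely, if $G$ is a circle graph, I would take any chord diagram realizing it; after an arbitrarily small perturbation of the endpoints we may assume all $2n$ endpoints are distinct (sliding a shared endpoint apart never creates or destroys an interior crossing, since the two separated points stay consecutive on $C$), and reading the endpoints around the circle produces a word in which every letter appears exactly twice and two letters alternate iff the corresponding chords cross iff the vertices are adjacent. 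This word $2$-represents $G$.

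The remaining point is the hypothesis that $G$ is not complete, which I would handle by pinning down the representation number $\mathcal{R}(G)$. A word in which every letter appears exactly once is a permutation of $V$, and any two letters then form a length-two subword, which is (vacuously) alternating; hence such a word represents $K_n$, and conversely $\mathcal{R}(G)=1$ holds precisely when $G$ is complete. Therefore, for $G$ different from a complete graph we have $\mathcal{R}(G)\ge 2$, and by Lemma \ref{l2} ``being $2$-word-representable'' is equivalent to $\mathcal{R}(G)=2$; the equivalence above then reads as $\mathcal{R}(G)=2$ iff $G$ is a (necessarily non-complete) circle graph. I expect the genuine content, and the only places demanding care, to be this bookkeeping at the two degenerate ends — the linear-versus-cyclic reading of the word and the exclusion of the complete graphs, whose natural representation number is $1$ rather than $2$ — since the heart of the statement, the alternation--crossing dictionary, is essentially a direct translation once the chord diagram is set up.
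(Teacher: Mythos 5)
The paper does not prove this lemma at all: it is imported verbatim from the cited reference \cite{halldorsson2011alternation}, so there is no in-paper argument to compare against. Your proof is correct and is essentially the standard one (and, as far as I recall, the one in the cited source): the bijection between $2$-uniform words and chord diagrams, under which alternation of two letters is exactly interleaving of the four endpoints, i.e.\ crossing of the two chords, and your check that this dictionary survives cutting the cyclic order to a linear one is the right point to verify. Two small remarks. First, in the converse direction your perturbation step implicitly uses the standard convention that a circle graph is realized by chords in general position (adjacency $=$ crossing in the open disk); if one instead allowed chords sharing an endpoint to count as intersecting, separating a point shared by three or more chords could not always preserve all pairwise adjacencies, so it is worth stating that convention explicitly rather than arguing via perturbation. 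Second, note that your dictionary actually proves the equivalence for \emph{all} graphs (complete graphs are both $2$-word-representable, e.g.\ by $v_1\cdots v_nv_1\cdots v_n$, and circle graphs); the exclusion of complete graphs in the statement is only needed for the $\mathcal{R}(G)=2$ formulation, exactly as your last paragraph explains, so that paragraph is a clarification of the hypothesis rather than a logically necessary step.
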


\subsection{General \emph{d}-word-representable graphs}
We now introduce a very similar concept that results in a generalization of $k$-word-representable graphs. 

\begin{definition}\label{defgeneralwordrepresentable}
Given a word  $W$ in an alphabet $A(W)$ for $|A(W)|\geq2$, we say that two letters $a,b\in A(W)$ are \emph{$d$-intersecting} for $d\geq 1$  if there exists an  alternating sub-word of $W$  of length at least $d+2$.    
\end{definition}

\begin{definition}
A graph $G=(V,E)$ is general $d$-word-representable if there exists a word $W$ from alphabet $V$ such that for every edge $\{x,y\}\in E$, letters $x$ and $y$ are $d$-intersecting in $W$. We say that $W$ general $d$-word-represents the graph $G$, and $W$ is called a general $d$-word-representant for $G$. The minimal $d$ such that a graph is general $d$-word-representable is called the graph's general $d$-word-representation number, and it is denoted by $\mathcal{GR}(G)$. 
\end{definition}

Clearly, this last definition is more general in the sense that every word-representable graph $G$ is also general $d$-word-representable for some $d$. In fact, using the same proof that the authors gave for the lemma \ref{l2} in \cite{kitaev2008representable} the following result is true. 

\begin{proposition}\label{d is d+1 }
Every general $d$-word-representable graph $G$ is also general $(d + 1)$-word-representable.
\end{proposition}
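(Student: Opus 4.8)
The plan is to mimic the proof of Lemma~\ref{l2} given in \cite{kitaev2008representable}: start from a general $d$-word-representant $W$ of $G$ and prepend to it a permutation of the whole alphabet, obtaining a longer word $W'$ that still encodes every edge but with one extra unit of alternation. Concretely, let $p(W)$ be the \emph{initial permutation} of $W$, i.e.\ the word listing each letter of $V$ exactly once in the order of its first appearance in $W$, and set $W' = p(W)\,W$. I claim $W'$ general $(d+1)$-word-represents $G$.

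The key observation is to recast the $d$-intersecting condition in terms of blocks. For two distinct letters $x,y$, let $W|_{\{x,y\}}$ be the subword of $W$ using only the occurrences of $x$ and $y$; this is a binary word, and the longest alternating subword of $W$ on $\{x,y\}$ has length equal to the number $b(W|_{\{x,y\}})$ of maximal constant runs (blocks) of $W|_{\{x,y\}}$. Thus $x$ and $y$ are $d$-intersecting precisely when $b(W|_{\{x,y\}})\ge d+2$. Now $W'|_{\{x,y\}} = \bigl(p(W)|_{\{x,y\}}\bigr)\,\bigl(W|_{\{x,y\}}\bigr)$, and since each of $x,y$ occurs exactly once in $p(W)$, the prefix $p(W)|_{\{x,y\}}$ is either $xy$ or $yx$, contributing exactly two blocks. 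Because concatenating two words can merge at most one block at the junction, we get $b(W'|_{\{x,y\}}) \ge 2 + b(W|_{\{x,y\}}) - 1 = b(W|_{\{x,y\}}) + 1$. Hence for every edge $\{x,y\}\in E$ we obtain $b(W'|_{\{x,y\}}) \ge (d+2)+1 = d+3$, i.e.\ $x$ and $y$ are $(d+1)$-intersecting in $W'$, which is exactly what we need.

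The step I expect to require the most care is the junction bookkeeping in the block count, but this is in fact the point at which the statement is \emph{easier} than Lemma~\ref{l2}: there the prepended permutation had to be chosen as $p(W)$ precisely so that non-adjacent pairs would \emph{remain} non-alternating, whereas general $d$-word-representability places no requirement whatsoever on non-edges. Consequently only the one-directional estimate above is needed, and any permutation of $V$ (not merely $p(W)$) would serve; prepending a permutation never decreases, and in fact strictly increases, the alternation length of every pair. This immediately yields the proposition, and makes clear that one could even raise $d$ to $d+j$ by prepending $j$ permutations.
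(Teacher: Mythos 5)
Your block-count reformulation and the estimate $b(W'|_{\{x,y\}}) \ge b(W|_{\{x,y\}}) + 1$ are correct, and the edge direction of your argument is fine. The gap is in the sentence where you declare the other direction vacuous. You read Definition~\ref{defgeneralwordrepresentable} literally as a one-way implication (``for every edge, the letters are $d$-intersecting''), but the paper plainly intends, and everywhere uses, the biconditional: the proofs of Proposition~\ref{toda_grafica_es_palabra} and Theorem~\ref{TodaGsepuede} both devote most of their effort to showing that the constructed word ``does not induce further edges,'' and the application of Proposition~\ref{d is d+1 } inside Theorem~\ref{sin_triangulos_encaje_Md} needs ``$P_i$ and $P_j$ intersect \emph{if and only if} $v_i$ and $v_j$ are $d$-intersecting'' to conclude that $G$ is the $1$-skeleton of the nerve. (Under your literal reading the whole notion trivializes: every graph would be general $1$-word-representable.) So you must also show that non-edges do not become $(d+1)$-intersecting in $W'$, i.e.\ that $b(W'|_{\{x,y\}}) \le d+2$ whenever $b(W|_{\{x,y\}}) \le d+1$.

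Your specific construction fails this. If $x$ first occurs before $y$ in $W$, then $p(W)|_{\{x,y\}} = xy$ ends in $y$ while $W|_{\{x,y\}}$ begins with $x$, so no block merges at the junction and $b(W'|_{\{x,y\}}) = b(W|_{\{x,y\}}) + 2$ for \emph{every} pair. A non-edge with longest alternation exactly $d+1$ (e.g.\ $W|_{\{x,y\}} = xyx$ with $d=2$) then reaches $d+3$ in $W'$ and becomes a spurious edge; your closing remark that ``any permutation of $V$ would serve'' fails for the same reason. The repair is small but essential: prepend the \emph{reverse} of the initial permutation (or any permutation whose restriction to each pair ends with the first letter of $W|_{\{x,y\}}$), forcing the junction merge so that $b(W'|_{\{x,y\}}) = b(W|_{\{x,y\}}) + 1$ exactly for every pair; then edges go from $\ge d+2$ to $\ge d+3$ and non-edges stay $\le d+2$. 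For comparison, the paper gives no written proof at all, deferring to the argument for Lemma~\ref{l2} in \cite{kitaev2008representable}; note that that argument preserves the properties ``perfectly alternating'' and ``has a repeated block,'' which is not the same as controlling the length of the longest alternating subword, so even the paper's one-line justification needs the adjustment above.
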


Now consider, for example, the wheel $W_5$ (see Figure \ref{wheel5}). It is known \cite{kitaev2008representable} that this graph is not $k$-word-representable; however, the word $W=156216326436546$  general $4$-word-represents this graph. Note for instance that for $1$ and $6$, the sub-word $1616666$ is not an alternating sequence, but contains one of length $4$ that is $1616$.  
The following proposition observes that every graph is a general $d$-representable graph.
\begin{figure}[h!]
\begin{center}
\includegraphics[scale=.7]{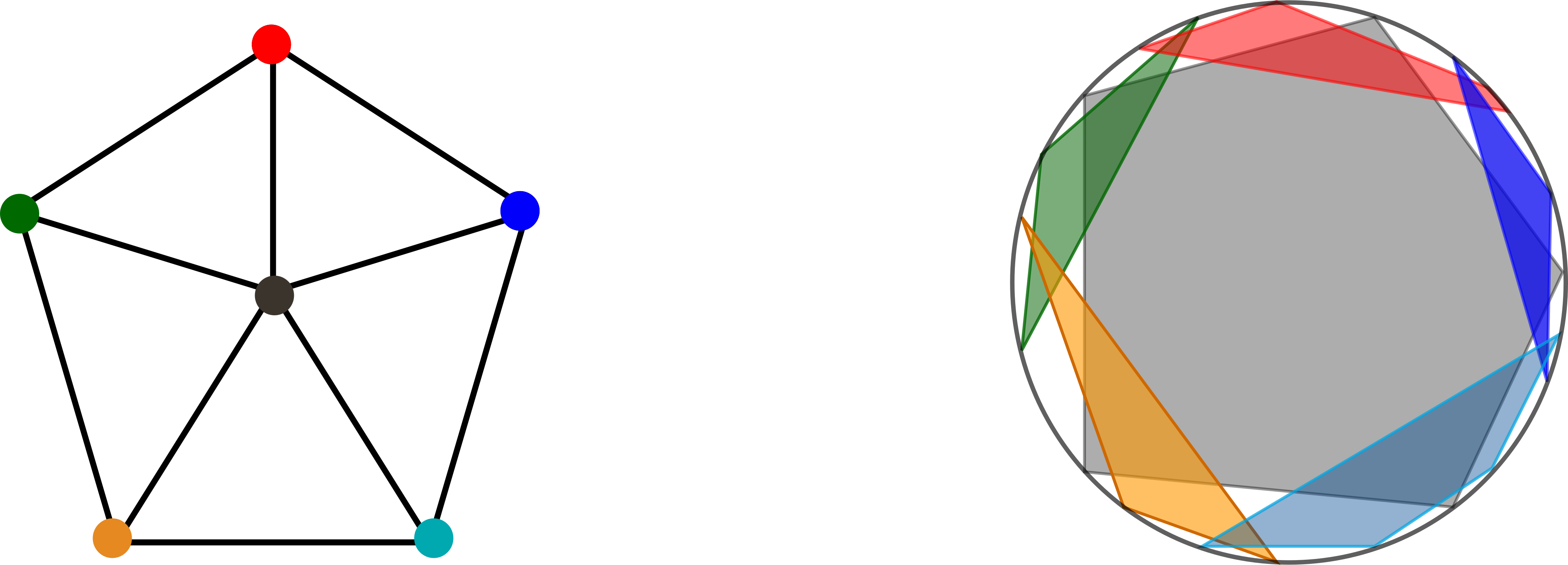}
\caption{$W_5$ and a polygon arrangement that induces it as polygon-circle graph.}
\label{wheel5}
\end{center}
\end{figure}

\begin{proposition}\label{toda_grafica_es_palabra}
Every graph $G=(V,E)$ is general $d$-word-representable for $d \geq m-1$ where $m=|E|$. 
\end{proposition}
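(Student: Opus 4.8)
The plan is to reduce the statement to a single base case and then exhibit one explicit word. First I would invoke Proposition \ref{d is d+1}: since general $d$-word-representability is monotone in $d$ (a word general $d$-word-representing $G$ can be promoted to general $(d+1)$-word-represent it), it suffices to establish the smallest case $d = m-1$, and the cases $d > m-1$ then follow by iterating the proposition. Unwinding the notion of $d$-intersecting from Definition \ref{defgeneralwordrepresentable}, the task becomes concrete: construct a word $W$ over the alphabet $V$ such that for every edge $\{x,y\}\in E$ the subword of $W$ formed by the occurrences of $x$ and $y$ contains an alternating subword of length at least $(m-1)+2 = m+1$.

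The conceptual key I would emphasize is that general $d$-word-representability is a \emph{one-sided} condition: only edges are required to be $d$-intersecting, while non-edges are left entirely unconstrained. This frees us to over-alternate every pair of letters at once. Concretely, fix any ordering $v_1,\dots,v_n$ of $V$ and set $W = (v_1 v_2\cdots v_n)^{k}$ for a value of $k$ to be chosen. For any two distinct letters $v_i,v_j$, the subword of $W$ on $\{v_i,v_j\}$ is exactly $(v_iv_j)^k$ (or $(v_jv_i)^k$), a perfectly alternating word of length $2k$; hence \emph{every} pair, and in particular every edge, is alternating of length $2k$. Choosing $k$ so that $2k\ge m+1$ (for instance $k=\lceil (m+1)/2\rceil$, or simply $k=m$) meets the required threshold, so $W$ general $(m-1)$-word-represents $G$. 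Any isolated vertices can be inserted once each, anywhere, without disturbing the edge alternations.

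I expect there to be no genuine obstacle here, only bookkeeping: one must (i) match the number of repetitions $k$ to the ``$d+2$'' threshold of the definition, and (ii) keep the argument phrased purely in terms of edges, so that the incidental over-alternation of non-edges is harmless. It is worth stressing in the write-up that the bound $m-1$ is far from tight; the construction certifies only an upper bound on $\mathcal{GR}(G)$ and makes no attempt at optimality, exactly as the wheel $W_5$ (with $m=10$ edges, yet general $4$-word-representable) already illustrates. If a leaner certificate is desired one could instead process the edges $e_1,\dots,e_m$ one at a time, interleaving the endpoints of each so that its alternation grows with the number of edges handled, but this refinement is unnecessary for the stated bound.
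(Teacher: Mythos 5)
There is a genuine gap, and it stems from how you read Definition \ref{defgeneralwordrepresentable}: you treat general $d$-word-representability as a \emph{one-sided} condition (``only edges are required to be $d$-intersecting, while non-edges are left entirely unconstrained'') and then deliberately over-alternate every pair via $W=(v_1v_2\cdots v_n)^k$. Although the definition's literal wording only quantifies over edges, the paper unmistakably intends the biconditional: its own proof of this proposition devotes half its length to showing that $W$ does \emph{not} induce any edge $e^*=\{x,y\}\notin E$; Theorem \ref{sin_triangulos_encaje_Md} relies on ``two sets $P_i$ and $P_j$ intersect if and only if colors $v_i$ and $v_j$ are $d$-intersecting in $W$'' to conclude $\mathcal{N}(P)=G$ rather than some supergraph; and Figure \ref{no 2-Tverberg} exhibits graphs that are \emph{not} general $2$-word-representable, which would be impossible under your reading (your construction would make every graph general $1$-word-representable, trivializing $\mathcal{GR}(G)$ and the remark that computing it ``seems an interesting problem on its own''). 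Under the correct reading, your word $(v_1v_2\cdots v_n)^k$ makes \emph{every} pair of vertices $d$-intersecting and therefore represents $K_n$, not $G$; fed into the moment-curve construction it would produce the nerve of a complete graph, destroying the applications.

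The fix is essentially the construction you relegate to your last sentence as an unnecessary ``leaner certificate'': it is in fact the whole point, and it is what the paper does. Write $W=W_1W_2\cdots W_m$, where each factor $W_i$ is an alternating block of length $d+2\geq m+1$ on the two endpoints of the edge $e_i$. Every edge is then $d$-intersecting inside its own block. For a non-edge $\{x,y\}$, no block contains both letters (a block's alphabet is an edge), so along any alternating subword in $\{x,y\}$ the block index strictly increases at each step; hence such a subword has length at most $m<d+2$, and $\{x,y\}$ is not $d$-intersecting. This is where the hypothesis $d\geq m-1$ is actually used, whereas in your argument it plays no role at all --- a telltale sign that the one-sided reading cannot be the intended one. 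Your opening reduction via Proposition \ref{d is d+1 } to the case $d=m-1$ is fine, but it is also unnecessary once the block construction is parametrized by $d$ directly.
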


\begin{proof}
Assume the edges of $G$ are given by $E=\{e_1,e_2,\dots ,e_m\}$ and $W=W_1W_2\dots W_m$ where every factor $W_i$ is an alternating sequence from alphabet 
$e_i=\{x_i,y_i\}$, $i\in \{1,2,\dots ,m\}$ and length $d+2\geq m+1$. Observe that $W$ $d$-word-represents $G$. Clearly by construction every edge of $G$ is induced by $W$. On the other hand, suppose that $W$ induces an edge $e^*=\{x,y\}\notin E$. Consider an alternating sub-sequence $S \subset W$ with letters $\{x,y\}$ and length at least $d+2$. Note that each letter in $S$ lies in a unique $W_i$; otherwise we have that $e^*\in E$, but this yields a contradiction on the length of $S$. 
\end{proof}

Finding the minimal $d$, $\mathcal{GR}(G)$ such that a graph $G$ is general $d$-word-representable seems an interesting problem on its own.


\subsection{2-word-representable graphs and polygon circle graphs}

\begin{definition}
A graph $G$ is a \emph{polygon-circle} graph if it is the intersection graph of convex polygons inscribed in a circle.
\end{definition}

The $5$-wheel graph $W_5$, for example, belongs to the family of polygon-circle graphs (see Figure \ref{wheel5}).  In \cite{ENRIGHT20193} Enright and Kitaev proved that the two classes of polygon-circle and word-representable graphs do not contain one another. But in the case of general $d$-word-representable graphs we have that the following proposition is true.

\begin{proposition}\label{Polygongraphis2-representable}
A graph $G=(V,E)$ is a polygon-circle graph if and only if $G$ is a general $2$-word-representable graph. 
\end{proposition}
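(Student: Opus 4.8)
The plan is to set up an explicit dictionary between words over $V$ and finite configurations of labelled points on a circle, under which being $2$-intersecting on the word side corresponds \emph{exactly} to having crossing convex hulls on the geometric side. To a word $W$ I associate the cyclic arrangement obtained by placing the successive letters $W(1),\dots,W(|W|)$ at $|W|$ distinct points around a circle $C$ in this order, and to each letter $v\in V$ the inscribed polygon $P_v:=\conv(\{\,i : W(i)=v\,\})$ spanned by the points carrying the label $v$. The whole argument rests on a combinatorial lemma that I would isolate and prove first: for two finite \emph{disjoint} point sets $A,B$ on a circle, $\conv(A)\cap\conv(B)\neq\emptyset$ if and only if their points interleave cyclically, i.e.\ there exist $a,a'\in A$ and $b,b'\in B$ occurring in the cyclic order $a,b,a',b'$. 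To prove it, note that since every point lies on $C$, the only points of $C$ in $\conv(A)$ are the vertices of $A$ (an edge of the hull is a chord, strictly inside the disk away from its endpoints); hence no vertex of $B$ lies in $\conv(A)$ and conversely, so the two hulls meet iff their boundaries cross, iff some chord spanned by $A$ crosses some chord spanned by $B$, which is precisely cyclic interleaving. Dually, the hulls are disjoint iff separable by a chord, iff $A$ sits in one arc and $B$ in the complementary arc, iff there is no interleaving.

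Next I would translate interleaving back to the word. Reading the cyclic arrangement starting from the cut point, cyclic interleaving of the two labels $x,y$ is equivalent to the projection of $W$ onto $\{x,y\}$ containing an alternating subword of length $4$ (a factor $xyxy$ or $yxyx$), i.e.\ to $x,y$ being $2$-intersecting in the sense of Definition~\ref{defgeneralwordrepresentable}. This equivalence is independent of where $C$ is cut: with only two symbols a cyclic sequence is non-interleaved exactly when its cyclic reduction is a single block, and one checks that this happens iff \emph{every} linear reading reduces to length at most $3$, while interleaving forces every reading to reduce to length at least $4$. This is the one place where the circular geometry and the linear nature of a word have to be reconciled, but the two-symbol reduction makes it elementary.

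With the dictionary in hand, both directions are short. For the $(\Leftarrow)$ direction, given a word $W$ that general $2$-word-represents $G$, the configuration above realizes $G$ as the intersection graph of the $P_v$: by the lemma and the translation, $P_x\cap P_y\neq\emptyset$ iff $x,y$ are $2$-intersecting iff $\{x,y\}\in E$. Letters occurring at most twice yield degenerate polygons (points or chords), which I would render genuine either by duplicating an occurrence \emph{adjacently} (this changes no interleaving) or, for a letter occurring once, by observing it is necessarily an isolated vertex and placing its chord on a private empty arc; thus $G$ is polygon-circle. For the $(\Rightarrow)$ direction, given inscribed polygons $(P_v)_{v\in V}$ realizing $G$, I cut $C$ at a point avoiding all vertices, read the vertices in cyclic order, and record for each the label of the polygon it belongs to, producing a word $W$ over $V$; the lemma then gives $\{x,y\}\in E$ iff $x,y$ are $2$-intersecting in $W$.

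The step I expect to be the main obstacle is the normalization needed to make this last word well defined: a single point of $C$ may be a vertex of several polygons and so cannot be assigned a single letter. I would first reduce to a representation with pairwise-disjoint vertex sets. Distinct far-apart vertices survive any small perturbation, so the delicate case is a pair of polygons meeting \emph{only} at a shared boundary vertex $p$ (a tangential touch): when I split $p$ into nearby distinct points, one per incident polygon, the split must turn every such touching pair into a genuine crossing, so as to preserve the corresponding edge, while creating no new crossing with polygons away from $p$ (the latter is automatic, since all perturbation happens inside a tiny arc around $p$). Choosing the cyclic order of the split points inside that arc so as to realize all the required crossings simultaneously is the genuinely fiddly point; once this general-position reduction is in place, everything else follows mechanically from the key lemma.
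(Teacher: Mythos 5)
Your proposal follows essentially the same route as the paper: both set up the dictionary between a word over $V$ read clockwise around a circle and the inscribed polygons spanned by the occurrences of each letter, with $2$-intersection corresponding to cyclic interleaving of the two labels and hence to crossing convex hulls. The paper asserts this correspondence in two sentences and treats it as clear, whereas your write-up supplies the interleaving lemma, the cyclic-versus-linear reconciliation (cf.\ Observation~\ref{rotar}), and the normalizations for degenerate polygons and shared corners that the paper leaves implicit; these additions are correct and address genuine gaps rather than changing the approach.
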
 

\begin{proof}
Let $G$ be a general $2$-word-representable graph, and suppose that $W$ is a word, say with length $n$ and alphabet $V$, that general $2$-word-represents $G$. Consider a set of $n$ points in a circle and color them with the labels of the letters of $W$ in the clockwise direction.  Clearly, the convex hulls of the chromatic classes are a polygon arrangement that induces $G$ as a polygon-circle graph. Reciprocally, if $G$ is a polygon-circle graph, and $\tilde{G}$ its realization in the circle, then we may label the vertices of every polygon with a different color. This clockwise coloring sequence starting at any vertex generates a word $W$ that general $2$-word-represents the graph $G$. 
\end{proof}

\begin{observation}\label{rotar}
If a word $W$ general $2$-word-represents a graph $G$, then every word generated by a cyclic permutation of $W$ also general $2$-word-represents  $G$. This clearly does not hold for $d$ odd. Consider the word $W=12121$; this word induces $K_2$ as a general $3$-word-representable graph but its cyclic permutation $W^{\prime}=11212$ induces two disjoint vertices.
\end{observation}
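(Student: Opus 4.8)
The plan is to prove both halves of the observation by reducing the $2$-intersecting relation to a rotation-invariant quantity attached to each pair of letters. Fix two distinct letters $x,y\in V$ and let $\pi_{xy}(W)$ denote the \emph{projection} of $W$ onto $\{x,y\}$, i.e.\ the subword obtained by deleting every occurrence of every other letter. By Definition \ref{defgeneralwordrepresentable} the pair $x,y$ is $2$-intersecting in $W$ precisely when $\pi_{xy}(W)$ contains an alternating subsequence of length at least $4$; thus it suffices to understand how this binary word behaves under rotation.

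First I would record the elementary fact that, for a word $v$ over $\{x,y\}$, the length of a longest alternating subsequence equals the number $L$ of maximal runs (\emph{blocks}) of $v$: choosing one letter from each block gives an alternating subsequence of length $L$, and conversely no two entries of an alternating subsequence can lie in the same block, so $L$ is also an upper bound. Next I pass to the cyclic picture. Reading $W$ cyclically and then deleting the other letters yields the same cyclic binary word no matter where we cut the cycle; hence the number $C_{xy}$ of \emph{cyclic} blocks of $\pi_{xy}(W)$ (with the first and last linear blocks merged when they carry the same letter) does not change under a cyclic permutation of $W$. Since colours must alternate an even number of times around a cycle, $C_{xy}$ is always even, and comparing with the linear block count $L$ gives the key identity
\begin{equation*}
C_{xy}=2\left\lfloor \tfrac{L}{2}\right\rfloor ,
\end{equation*}
because the first and last blocks coincide in colour exactly when $L$ is odd.

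The heart of the argument is now a parity observation. From the identity above one gets $L\geq 4$ if and only if $C_{xy}\geq 4$, so the \emph{linear} condition defining $2$-intersecting can be read off from the \emph{rotation-invariant} quantity $C_{xy}$. Consequently, if $W'$ is any cyclic permutation of $W$ and $L'$ is the number of blocks of $\pi_{xy}(W')$, then $L\geq 4 \Leftrightarrow C_{xy}\geq 4 \Leftrightarrow L'\geq 4$; that is, $x$ and $y$ are $2$-intersecting in $W$ if and only if they are $2$-intersecting in $W'$. As this holds for every pair of letters, $W'$ induces exactly the edge set of $G$, and therefore every cyclic permutation of $W$ general $2$-word-represents $G$. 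Equivalently, one may invoke Proposition \ref{Polygongraphis2-representable}: a cyclic permutation merely rotates the inscribed polygon arrangement on the circle, which leaves the intersection graph unchanged.

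I expect the only delicate point to be this parity step, and it is exactly what makes the statement fail for odd $d$: there the relevant threshold is $d+2$, an \emph{odd} number, whereas $C_{xy}$ is always even, so no rotation-invariant reading of the condition $L\geq d+2$ is possible. Concretely, the longest alternating subsequence can drop from $d+2$ to $d+1$ when a letter is moved past the cut, which is precisely the phenomenon illustrated by $W=12121$ (linear alternation length $5$) and its rotation $W'=11212$ (linear alternation length $4$) for $d=3$. Thus the block-count identity both proves the even case and explains the odd counterexample.
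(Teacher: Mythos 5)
Your proof is correct, and it supplies an argument where the paper gives none: Observation \ref{rotar} is stated without proof, the even case evidently being regarded as immediate from the polygon-circle correspondence of Proposition \ref{Polygongraphis2-representable} (a cyclic shift of $W$ relabels the starting point on the circle but leaves the inscribed polygons, hence their intersection graph, unchanged) --- which is exactly your closing alternative argument. Your primary route, via the block count $L$ of the projection $\pi_{xy}(W)$ and the rotation-invariant cyclic block count $C_{xy}=2\lfloor L/2\rfloor$, is a clean self-contained combinatorial proof, and it buys something the geometric picture does not: it isolates the parity phenomenon, showing that the threshold $L\geq d+2$ is readable from the rotation invariant $C_{xy}$ precisely when $d$ is even, which simultaneously proves the even case and explains (rather than merely exhibits) the odd counterexample $12121\mapsto 11212$. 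One small caveat: the identity $C_{xy}=2\lfloor L/2\rfloor$ and the claim that $C_{xy}$ is even both fail in the degenerate case where $\pi_{xy}(W)$ uses only one letter (there $L=C_{xy}=1$); this does not affect the conclusion, since then $L\leq 1<4$ in every rotation, but you should flag that the identity is asserted under the standing assumption that both letters occur.
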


One natural question to ask is what kind of graphs are not general $2$-word-representable. Using a data base from \cite{Combos} we have computationally checked all connected planar graphs with at most nine vertices and have found that the following three planar graphs are not general $2$-word-representable, been the most right not only planar but bipartite (see Figure \ref{no 2-Tverberg}).

\begin{figure}[h!]
\begin{center}
\includegraphics[scale=.7]{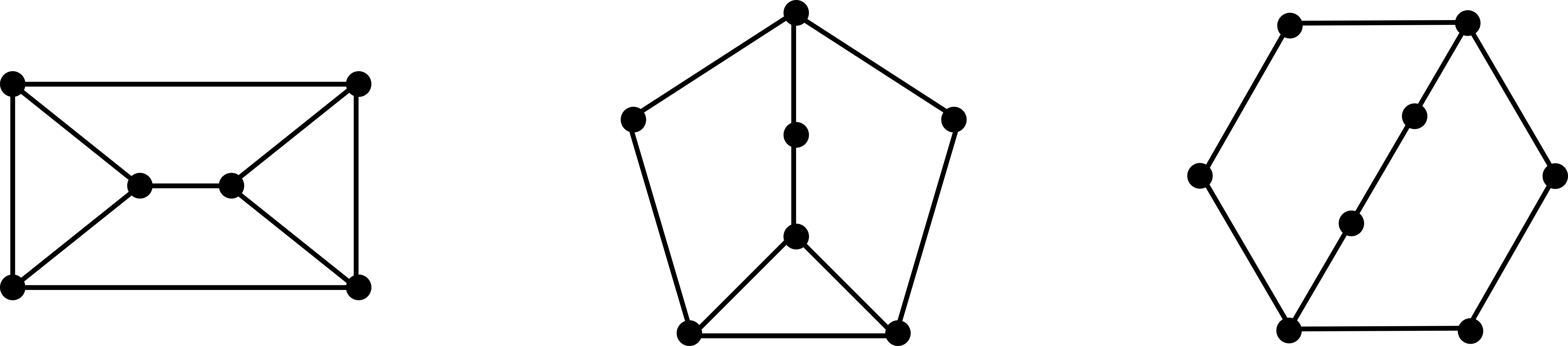}
\caption{ All graphs up to $9$ vertices that are not general $2$-word-representable.}
\label{no 2-Tverberg}
\end{center}
\end{figure}

\section{Nerves over the cyclic polytope and its relation with words}\label{nerves over cyclic polytopes}

The \emph{moment} curve $M_d$ is the curve in $\mathbb{R}^d$ defined by $\{x(t) = (t, t^2,\dots , t^d)\colon t\in \mathbb{R}\}$. Consider a set $T=\{t_1,\dots ,t_r\}\subset\mathbb{R}$ with $r>d+1$ and let $x(T)=\{x(t_i)\colon i = 1,2,\dots , r\}$. Then $\conv(x(T))$ is known as a cyclic $d$-polytope.  As all $r$-point sets in the moment curve are combinatorially equivalent \cite{grunbaum2003convex}, we will refer to any of them as $x(r,d)$ and denote their convex hull by $C(r, d)$.  The following classical theorem gives the combinatorial structure of cyclic polytopes.

\begin{theorem}[Gale's evenness condition \cite{gale1963neighborly}]\label{Gale}
Let $T=\{t_1,\dots ,t_r\}\subset\mathbb{R}$ with $t_1 < t_2 < \cdots < t_r$ and $T_d\subset T$ a subset with $d$ points. Then $C(x(T_d))$ is a facet of $C(x(T))$ if and only if $T\setminus T_d$ are separated by an even number of elements of $T_d$ in the sequence $(t_1,\dots ,t_r)$. 
\end{theorem}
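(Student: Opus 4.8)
The plan is to translate the geometric facet condition into a sign condition on a univariate polynomial and then read off the parity. First I would recall that any $d$ distinct points on the moment curve are affinely independent: the associated determinant is of Vandermonde type, hence nonzero whenever the parameters are distinct. Consequently the points $x(T_d)$ span a unique hyperplane $H\subset\mathbb{R}^d$, and $C(x(T_d))$ is a facet of $C(x(T))$ precisely when $H$ is a supporting hyperplane, i.e.\ when all remaining vertices $x(t)$ with $t\in T\setminus T_d$ lie on one open side of $H$.

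Next I would describe $H$ analytically. Writing $H=\{y\colon \langle a,y\rangle=b\}$, a point $x(t)=(t,t^2,\dots,t^d)$ lies on $H$ exactly when the polynomial $q(t)=a_d t^d+\cdots+a_1 t-b$ vanishes. Since $q$ has degree at most $d$ and vanishes at the $d$ distinct parameters in $T_d$, it must factor as $q(t)=a_d\prod_{t_j\in T_d}(t-t_j)$ with $a_d\neq 0$; in particular $q$ has no zeros off $T_d$, so no other vertex lies on $H$. Normalizing so that $a_d>0$, the side of $H$ containing $x(t)$ is governed by the sign of $q(t)$, and for $t\notin T_d$ this sign equals $(-1)^{N(t)}$, where $N(t)=|\{t_j\in T_d\colon t_j>t\}|$ counts the elements of $T_d$ exceeding $t$.

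The final step is the parity bookkeeping. For two parameters $t<t'$ in $T\setminus T_d$ one has $N(t)-N(t')=|\{t_j\in T_d\colon t<t_j<t'\}|$, which is exactly the number of elements of $T_d$ separating $t$ and $t'$ in the ordered sequence $(t_1,\dots,t_r)$. Thus $q(t)$ and $q(t')$ have the same sign if and only if this separating count is even. Therefore all points of $T\setminus T_d$ lie on the same side of $H$ — equivalently $C(x(T_d))$ is a facet of $C(x(T))$ — if and only if every pair of points of $T\setminus T_d$ is separated by an even number of elements of $T_d$, which is precisely Gale's evenness condition.

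I expect the only genuinely delicate point to be the sign bookkeeping in the last paragraph: one must verify that the relevant quantity is the number of $T_d$-elements \emph{strictly} between the two chosen parameters, and use that points of $T\setminus T_d$ never coincide with elements of $T_d$ so that no factor of $q$ vanishes there. The affine-independence and polynomial-factorization steps are routine consequences of the Vandermonde structure of the moment curve and should require no further argument.
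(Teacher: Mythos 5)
Your argument is correct and is the standard proof of Gale's evenness condition: reduce the supporting-hyperplane condition to the sign of the degree-$d$ polynomial $q(t)=c\prod_{t_j\in T_d}(t-t_j)$ obtained by restricting the hyperplane's affine functional to the moment curve, then read off the parity of sign changes. The paper itself states this theorem as a cited classical result and gives no proof, so there is nothing to compare against; your write-up matches the argument found in the standard references (Gale, Gr\"unbaum, Ziegler), and the one point you flag as delicate --- that the separating count is of elements of $T_d$ strictly between the two parameters, and that $q$ is nonzero off $T_d$ --- is handled correctly.
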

 
In \cite{breen1973primitive} M.\ Breen shows that the set $A\cup B$ is a primitive Radon partition in the cyclic polytope $C(r,d)$ if the following condition holds. 

\begin{theorem}[\cite{breen1973primitive}]\label{Breen_theorem}
Let $A$ and $B$ be two point sets in $V(\vert A  \cup  B \vert ,d)$. Then $\conv(A)\cap \conv(B) \neq \emptyset$ if and only if there exists an  alternating sequence of length $d+2$ along the moment curve.
\end{theorem}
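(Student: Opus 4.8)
The plan is to prove both implications of Theorem~\ref{Breen_theorem} through one observation: the restriction of an affine functional to the moment curve is a univariate polynomial of degree at most $d$, so its sign pattern along the curve changes at most $d$ times, and, conversely, every sign pattern with at most $d$ changes is realized by such a polynomial. First I would fix notation and reduce to the concrete case. Writing $A\cup B=\{x(t_1),\dots,x(t_r)\}$ with $t_1<\cdots<t_r$, I read off a binary word $c\in\{A,B\}^{r}$ recording the color (membership in $A$ or $B$) of each point in this left-to-right order; here an \emph{alternating sequence of length $d+2$} means an alternating subsequence of $c$ of that length. Since the predicate $\conv(A)\cap\conv(B)\neq\emptyset$ is a Radon dependency and hence an order-type invariant, combinatorial equivalence of all $r$-point sets on the moment curve lets me assume the points lie literally on $M_d$. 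I would then record the elementary fact that for $w\in\mathbb{R}^d$ and $b\in\mathbb{R}$ the map $t\mapsto \langle w,x(t)\rangle+b = b+w_1t+\cdots+w_dt^{d}$ is an arbitrary real polynomial of degree at most $d$, so it has at most $d$ roots and at most $d$ sign changes along $t_1<\cdots<t_r$.

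The combinatorial bridge is a short lemma: the length of the longest alternating subsequence of a binary word $c$ equals its number of maximal monochromatic runs (blocks). The lower bound picks one element per run, and the upper bound uses that two picks from the same contiguous monochromatic run cannot both appear in an alternating subsequence. Consequently, an alternating subsequence of length $d+2$ exists if and only if $c$ has at least $d+2$ blocks, i.e.\ at least $d+1$ color changes.

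For the direction ``alternating subsequence of length $d+2$ $\Rightarrow$ intersection,'' I would argue by separation. Both $\conv(A)$ and $\conv(B)$ are compact, so if they were disjoint they could be strictly separated by a hyperplane $h$; strict separation forces $h$ to take opposite signs on the $A$-points and the $B$-points, so along an alternating subsequence $x(s_0),\dots,x(s_{d+1})$ the polynomial $t\mapsto h(x(t))$ would change sign at all $d+1$ consecutive steps, hence have at least $d+1$ roots, which is impossible for degree at most $d$. Thus no separating hyperplane exists and the hulls must meet. For the converse I would use contraposition together with the bridge lemma: if there is no alternating subsequence of length $d+2$, then $c$ has at most $d+1$ blocks and hence at most $d$ color changes, and I place one simple root $r_i$ strictly inside each gap where the color changes, forming $q(t)=\varepsilon\prod_i(t-r_i)$ with $\deg q\le d$ and a global sign $\varepsilon$ chosen so that $q>0$ on every $A$-block and $q<0$ on every $B$-block. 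Writing $q(t)=\langle w,x(t)\rangle+b$ recovers an affine functional strictly separating $A$ from $B$, so $\conv(A)\cap\conv(B)=\emptyset$.

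I expect the main nuisances to be (a) the clean combinatorial identity between longest-alternating-subsequence length and block count, and (b) the careful invocation of strict separation of compact convex sets, ensuring that ``not strictly separable'' really yields a nonempty intersection and that boundary contact is handled correctly. As an alternative for the ``$\Leftarrow$'' direction that sidesteps separation entirely, I could invoke the unique (up to scaling) affine dependence among any $d+2$ points on $M_d$, whose coefficients alternate in sign because of the positivity of the Vandermonde minors; when the colors also alternate, the positively and negatively weighted points are exactly the chosen $A$-points and $B$-points, which directly exhibits a common point of $\conv(A)$ and $\conv(B)$ as a Radon partition. The positivity-of-minors fact meshes naturally with Gale's evenness condition (Theorem~\ref{Gale}) already available in the excerpt.
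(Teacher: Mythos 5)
The paper offers no proof of this statement: it is quoted directly from Breen's 1973 paper on primitive Radon partitions of cyclic polytopes, so there is no in-text argument to compare yours against. Your proposal is correct and self-contained, and both halves are the standard ones. The forward implication (alternating $(d+2)$-subsequence implies intersecting hulls) works exactly as you say: a strictly separating affine functional restricts on $M_d$ to a nonzero polynomial of degree at most $d$ (nonzero because it is strictly positive at every $A$-point), yet it would need $d+1$ sign changes, hence $d+1$ roots in disjoint intervals. The converse via contraposition is also sound: your bridge lemma (longest alternating subsequence of a binary word equals the number of maximal monochromatic runs, since an alternating subsequence can use at most one index per run) gives at most $d$ color changes, and planting one simple root in each change-gap produces a degree-$\le d$ polynomial whose sign alternates in step with the blocks, i.e.\ a strict separator. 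The reduction to points literally on $M_d$ is legitimate because both strict separability and Radon dependencies are order-type invariants. Your proposed alternative for the forward direction --- the unique affine dependence among any $d+2$ points of the moment curve has coefficients of alternating sign by positivity of Vandermonde minors, so an alternating coloring directly exhibits a Radon point of $\conv(A)\cap\conv(B)$ --- is in fact the closest in spirit to what Breen actually proves, namely that the primitive Radon partitions of $C(r,d)$ are exactly the alternating ones on $d+2$ vertices; the paper's statement is the mild extension of that characterization from primitive partitions to arbitrary $A,B$, which your subsequence formulation handles cleanly.
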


As we will see next, Breen's theorem gives us the key to the relationship between realization of nerves in convex position and  general $k$-word-representable graphs and $k$-word-representable graphs. In particular, Theorem \ref{sin_triangulos_encaje_Md} shows that every triangle-free general $d$-word-representable graph
is partition-induced on a set of points in  $M_d$. Furthermore we are ready to prove Theorem \ref{sin_triangulos_encaje_Md}.


\smallskip
\noindent {\bf{ Proof of theorem \ref{sin_triangulos_encaje_Md}}}
\begin{proof} 
By propositions \ref{d is d+1 } and \ref{toda_grafica_es_palabra} we have that for every $d \geq d_0=\mathcal{GR}(G)$ there exist a word $W$ with  alphabet $V$, such that $W$ general $d$-word-represents $G$, set $m=|W|$.  Let $S$ be any set of $m$ points in general position in dimension $d$. And
let set $S_1$ be a set of $m$ points; $S_1=\{s_1,s_2,\dots ,s_m\}\subset M_{d}$ on the moment curve. Color them by $\mathcal{C}\colon S_1\mapsto V$ defined by $\mathcal{C}(s_i)=W(i)$ and let $P = \{P_1, P_2, \dots , P_m\}$ be the set where $P_i$ is the convex hull of the color class $v_i$. By Theorem \ref{Breen_theorem}, two sets $P_i$ and $P_j$ intersect if and only if colors $v_i$ and $v_j$ are $d$-intersecting in $W$. This implies that $G$ is preserved as the $1$-skeleton of $\mathcal{N}(P)$. Furthermore, since $G$ does not have triangles, there are no intersections of three or more sets of $P$; therefore, there are no faces with a dimension greater than $1$, so we conclude that $\mathcal{N}(P)=G$. Finally we know, than any two set in convex position have the same order type thus, there exist a bijection $\sigma$ that preserves orientation between $S_1$ and $S$. By Lemma 2 of  \cite{TverbergTipeTheorems}, the partition $\mathcal{P} = (P_1, P_2, \dots, P_m)$ of $S_1$ and the corresponding partition of $S$ via $\sigma$, denoted $\sigma{\mathcal{P}} = \{\sigma(P_1), \sigma(P_2), \dots ,\sigma(P_n)\}$,  have the same intersection graph $G$.  
\end{proof}

Observe that since $k$-word-representable graphs are also general $d$-word-representable graphs, using Theorem \ref{sin_triangulos_encaje_Md} and Lemmas \ref{l1}, \ref{l2} and \ref{l4} the following results are true. 

\begin{theorem}\label{bipartiteinduce}
Every bipartite graph $G=(U\cup V,E)$ is partition-induced for a point set $S\subset M_d$ for some positive integer $d$.
\end{theorem}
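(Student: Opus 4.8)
The plan is to exploit two features of a bipartite graph $G=(U\cup V,E)$ at once: it is triangle-free, and it is word-representable. The first feature is exactly the hypothesis of Theorem \ref{sin_triangulos_encaje_Md}, so as soon as we exhibit \emph{some} dimension $d$ for which $G$ is general $d$-word-representable, the conclusion follows verbatim from that theorem, with the partition-inducing point set placed on $M_d$. The entire proof therefore reduces to producing such a $d$, and ideally a small one, rather than the crude $d\geq |E|-1$ supplied by Proposition \ref{toda_grafica_es_palabra} for arbitrary graphs.

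First I would record that $G$ is word-representable. A bipartite graph is $2$-colorable, hence $3$-colorable, and $3$-colorable graphs are word-representable; equivalently, orienting every edge from $U$ to $V$ gives a (vacuously) transitive orientation, so $G$ is a comparability graph and thus word-representable. By Lemma \ref{l1} this makes $G$ a $k$-word-representable graph for some $k=\mathcal{R}(G)$, and by Lemma \ref{l2} the value of $k$ may be increased freely.

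Next I would convert the classical $k$-word-representant $W$ into a general $d$-word-representant. Fix two letters $x,y$, each occurring $k$ times in $W$, and examine the binary subword they induce. Reading that subword as a concatenation of maximal runs, its longest alternating subword equals its number of runs. If $\{x,y\}\in E$ the two letters alternate, every occurrence is its own run, and the longest alternating subword has length $2k$; if $\{x,y\}\notin E$ they do not alternate, so some run has length at least $2$ and the longest alternating subword has length at most $2k-1$. Choosing $d=2k-2$, i.e.\ $d+2=2k$, separates the two cases exactly: edges are $d$-intersecting while non-edges are not. Hence $W$ general $(2k-2)$-word-represents $G$, giving $\mathcal{GR}(G)\leq 2\mathcal{R}(G)-2$, and Theorem \ref{sin_triangulos_encaje_Md} applied with this $d$ yields the required set $S\subset M_d$.

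The delicate point, and the step I expect to need the most care, is the run-counting estimate for non-edges: one must guarantee that a non-adjacent pair of letters cannot accidentally contain an alternating subword as long as that of an adjacent pair. The bound is tight, since a single repeated letter lowers the run count by exactly one, yielding length $2k-1<d+2$, so the choice $d+2=2k$ is forced. This is precisely what ensures that passing from the standard notion of $k$-word-representability to the general $d$-word-representability demanded by Theorem \ref{sin_triangulos_encaje_Md} neither manufactures spurious edges nor erases genuine ones.
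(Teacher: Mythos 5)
Your proof is correct and follows essentially the same route as the paper's: bipartite graphs are triangle-free and word-representable, hence $k$-word-representable by Lemma \ref{l1}, hence general $d$-word-representable, so Theorem \ref{sin_triangulos_encaje_Md} applies directly. The only place you go beyond the paper is that you actually justify the conversion step the paper merely asserts --- that a $k$-word-representant is a general $(2k-2)$-word-representant, via the run-counting argument distinguishing the $2k$ runs of an adjacent pair from the at most $2k-1$ runs of a non-adjacent pair --- and that argument is sound.
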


\begin{theorem}
Every circle graph (intersection graph of a set of chords with vertices on a circle) $G=(V,E)$ is partition-induced for a point set $S\subset M_d$ for $d\geq2$.
\end{theorem}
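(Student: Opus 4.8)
\emph{Proof idea.} The plan is to reduce $G$ to a word, realize that word on the moment curve so that Breen's theorem recovers $G$ as the $1$-skeleton, and then argue separately — without triangle-freeness — that no higher faces appear. First, by Lemma~\ref{l4} I may assume $G$ is not complete and fix a word $W$ that $2$-word-represents $G$, so every vertex of $V$ occurs exactly twice. The key observation is that a $2$-word-representant is automatically a \emph{general} $2$-word-representant: for two letters $x,y$ each occurring twice, the longest alternating subword in $\{x,y\}$ has length at most $4$, and equals $4$ (the pattern $xyxy$) precisely when $x$ and $y$ alternate in $W$. Hence $x,y$ are $2$-intersecting iff $\{x,y\}\in E$, so $W$ general $2$-word-represents $G$. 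The same count shows more generally that a $k$-word-representant general $(2k{-}2)$-word-represents $G$, since full alternation realizes exactly the maximal alternating length $2k$.

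Next, following the realization in the proof of Theorem~\ref{sin_triangulos_encaje_Md}, I would place $|W|=2|V|$ points $s_1,\dots,s_{|W|}$ on the moment curve $M_2$ in this order and color $s_i$ by $W(i)$. Each color class is then a pair of points, so its convex hull is a chord of the parabola. By Breen's theorem (Theorem~\ref{Breen_theorem}) with $d=2$, where the threshold alternation length is $d+2=4$, two chords meet iff their labels are $2$-intersecting, i.e.\ iff the corresponding edge lies in $G$; thus the $1$-skeleton of the nerve $\mathcal{N}(P)$ is exactly $G$. For an arbitrary $d\ge 2$ one runs the same argument with a $k$-word-representant chosen so that $2k-2=d$ (such representants exist by Lemma~\ref{l2}, and Proposition~\ref{d is d+1 } supplies the odd values of $d$), placing the $k$-point color classes on $M_d$; Breen again returns the $1$-skeleton $G$.

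The crux — and the step I expect to be the main obstacle — is ruling out faces of dimension $\ge 2$. The triangle-free shortcut used in Theorem~\ref{sin_triangulos_encaje_Md} is unavailable here, since circle graphs contain triangles, so I cannot argue that three hulls never meet; instead I must show that three \emph{pairwise} meeting hulls still have empty common intersection. For $d=2$ this is transparent: a $2$-face would mean three of the chords are concurrent, and concurrency is an algebraic condition of positive codimension in the parameters $t_1<\dots<t_{|W|}$, whereas the order type — and hence the entire $1$-skeleton computed above — depends only on their ordering. Choosing the $t_i$ generically therefore preserves the $1$-skeleton while simultaneously destroying all (finitely many) triple concurrences, giving $\mathcal{N}(P)=G$. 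For $d>2$ I expect the same general-position principle to work — a common point of three pairwise-intersecting hulls on $M_d$ is a coincidence of positive codimension and should be removable by a perturbation that leaves the order type unchanged — but making this precise in higher dimensions is the delicate point. Finally, the complete graphs excluded from Lemma~\ref{l4} are handled directly, since $(12\cdots n)^{2}$ general $2$-word-represents $K_n$ and the identical realization applies.
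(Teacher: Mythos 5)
Your route is essentially the paper's own: the paper derives this theorem in one line from Lemma~\ref{l4} together with Theorem~\ref{sin_triangulos_encaje_Md} and Lemmas~\ref{l1}, \ref{l2} (circle graph $\to$ $2$-word-representant $\to$ general word-representant $\to$ points on the moment curve $\to$ Breen), and your observation that a $k$-word-representant is automatically a general $(2k-2)$-word-representant is exactly the unstated step behind the paper's phrase ``since $k$-word-representable graphs are also general $d$-word-representable graphs.'' Where you add genuine value is in noticing that Theorem~\ref{sin_triangulos_encaje_Md} is stated only for triangle-free graphs while circle graphs contain triangles, so the one-line derivation does not by itself rule out $2$-faces of the nerve. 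Your fix for $d=2$ --- each color class is a chord of the parabola, two chords meet in at most one point, and concurrence of three chords is a positive-codimension algebraic condition on the parameters $t_1<\cdots<t_{|W|}$ that can be destroyed by a generic choice respecting the order (which is all Breen's theorem sees) --- is precisely the content the paper supplies only later, in Observation~\ref{cord_arragement}, via a chord arrangement ``without triple chord intersections.'' Your separate treatment of complete graphs via $(12\cdots n)^2$ is also correct and necessary given the exclusion in Lemma~\ref{l4}.

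The one real gap is the case $d>2$, and you have diagnosed it accurately rather than closed it. Once each letter occurs $k\ge 3$ times, the color classes are full-dimensional polytopes on $M_d$, their pairwise intersections can be full-dimensional, and a common point of three pairwise-intersecting hulls is then an \emph{open} (codimension-zero) condition; a small perturbation of the $t_i$ cannot be expected to remove it. So the general-position principle that works for chords does not carry over, and ruling out higher faces for $d>2$ requires an actual argument (for instance, an analysis of which primitive Radon partitions of a cyclic polytope can share a point, or a construction keeping every color class of size two in $M_d$). Note, however, that the paper's own justification inherits exactly the same deficiency, since it cites the triangle-free Theorem~\ref{sin_triangulos_encaje_Md} for a class of graphs that is not triangle-free; your proposal is, if anything, more honest about where the argument is incomplete.
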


\section{Extensions and Tverberg-type theorems}\label{Tverberg type theorems and extensions}

In a similar way to \cite{TverbergTipeTheorems}, we will extend the colorings from specific point sets to  more general configurations. Let us define the extension concept formally. 

\begin{definition}\label{defextendable}
Let $S$ be a set of points in $\mathbb{R}^d$ and let $P$ be a color partition of $S$ into color classes $P=S_1,S_2,\dots , S_m$. Then there is a function  $\mathcal{C}\colon S\mapsto \{c_1,c_2,\dots ,c_m\}$, a ``coloring" that yields a specific nerve $\mathcal{N}(P)$. We say that  $\mathcal{C}$ is extendable if for every point set $\overline{S}$ containing $S$, there is a color partition $\overline{P}=\overline{S}_1,\overline{S}_2,\dots \overline{S}_m$ and a coloring $\overline{\mathcal{C}}\colon \overline{S}\mapsto \{c_1,c_2,\dots ,c_m\}$ such that $\overline{C}|_{S}=\mathcal{C}$ and $\mathcal{N}( \overline{P})$ is isomorphic to $\mathcal{N}(P)$.

\end{definition}

In this section we will show that every simplicial complex $\mathcal{K}$ that is partition-induced on a set of points in convex position in the plane is extendable; therefore, it is $2$-Tverberg.  It turns out that in dimension $d\geq 3$, the extension is not that clear (see Figure 3); however, we will show that some subfamilies of graphs are extendable.

\subsection{Extensions in $\mathbb{R}^2$}

We begin by extending colorings of partitions of point sets in convex position contained in $\mathbb{R}^2$, and obtain the following theorem. 

\begin{theorem}\label{2-extension}
Every simplicial complex $\mathcal{K}$ that is partition-induced in any set of points in convex position is $2$-Tverberg. 
\end{theorem}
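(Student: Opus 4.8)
The plan is to split the argument into a combinatorial \emph{seeding} step and a geometric \emph{extension} step, with the latter carrying all the difficulty. Fix the realization guaranteed by the hypothesis: a convex-position set $S_0\subset\mathbb{R}^2$ with $|S_0|=n_0$ and a coloring $\mathcal{C}_0$ into $m$ classes whose nerve is $\mathcal{K}$. I claim it suffices to prove the following \emph{extension lemma}: any coloring of a convex-position point set in $\mathbb{R}^2$ whose nerve is $\mathcal{K}$ is extendable in the sense of Definition \ref{defextendable}. Granting this, I would set $\Tv(\mathcal{K},2)$ equal to the Erd\H{o}s--Szekeres bound $\mathrm{ES}(n_0)$ that forces $n_0$ points in convex position, and argue exactly in the spirit of the proof of Theorem \ref{sin_triangulos_encaje_Md}.

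For the seeding step, take any $S\subset\mathbb{R}^2$ with $|S|\geq \mathrm{ES}(n_0)$. The Erd\H{o}s--Szekeres theorem yields a subset $S'\subseteq S$ of $n_0$ points in convex position. Since all planar point sets in convex position of the same cardinality share a single order type, there is an orientation-preserving bijection $\sigma\colon S_0\to S'$, and transporting $\mathcal{C}_0$ along $\sigma$ gives a coloring of $S'$ whose nerve is again $\mathcal{K}$; this is the order-type invariance already invoked through Lemma 2 of \cite{TverbergTipeTheorems}. Applying the extension lemma to $S'\subseteq S$ then produces a coloring of all of $S$ with nerve $\mathcal{K}$, which is precisely the statement that $\mathcal{K}$ is $2$-Tverberg with the stated constant, so every color class remains nonempty because the $n_0$ seed points already exhaust the $m$ colors.

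The extension step is the crux. Because convex hulls only grow as points are added, no face of $\mathcal{K}$ can ever be destroyed, so the whole task is to color the points of $\overline{S}\setminus S'$ so as not to \emph{create} any new face. Rather than inserting points one at a time (which immediately leaves convex position), I would aim for a single simultaneous assignment: decompose $\mathbb{R}^2$ into cells, each labeled with a ``safe'' color, and color every extra point by the label of its cell. Two planar features make such a decomposition plausible. First, any two color classes that are non-adjacent in $\mathcal{K}$ have disjoint convex hulls and are therefore strictly separated by a line, so the arrangement formed by these separating lines together with the color-class hulls cuts the plane into regions in which one can read off a color that absorbs a point without meeting a separated class. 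Second, the cyclic polygon--circle interleaving structure of convex position used throughout Section \ref{nerves over cyclic polytopes} governs exactly which new crossings become possible when a hull is extended toward a given interior or exterior point of $\conv(S')$, so I would treat interior and exterior points separately and verify that a compatible color is always available.

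The main obstacle is precisely this final assignment, for two reasons. The color chosen for an incoming point must be simultaneously consistent with \emph{all} mutually non-adjacent pairs, and indeed all non-faces of higher dimension, not merely one separating line at a time; guaranteeing that a single label satisfies every such constraint at once, so that no new edge \emph{and} no new higher-dimensional simplex of the nerve appears, is the delicate part. This is also exactly where dimension two is essential: as the discussion of the $d\geq 3$ case in this section indicates, the underlying linear separation of disjoint convex classes is no longer available in higher dimensions, which is why the theorem is confined to the plane and why only special subfamilies survive the extension in $\mathbb{R}^d$ for $d\geq 3$.
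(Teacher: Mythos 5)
Your seeding step is exactly the paper's: Erd\H{o}s--Szekeres to find $n_0$ points in convex position, order-type invariance to transport the given coloring, and then everything reduces to an extension lemma. The problem is that the extension lemma is the entire content of the theorem, and you do not prove it --- you describe a strategy (decompose the plane into cells via the lines separating non-adjacent color classes, assign each cell a ``safe'' color) and then explicitly concede that verifying the existence of a single simultaneously consistent color per cell is ``the delicate part.'' That concession is the gap. Nothing in the proposal rules out a point $p$ for which \emph{every} color $c$ has some non-neighbor $c'$ with $\conv(S_c\cup\{p\})\cap\conv(S_{c'})\neq\emptyset$; planar separability of each non-adjacent pair does not by itself yield such a global choice. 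There is also a compositionality problem you do not address: even if each new point individually admits a safe color $c$, assigning $c$ to several new points at once can create a new intersection, since $\conv(S_c\cup\{t_1,t_2\})$ may meet $\conv(S_{c'})$ while $\conv(S_c\cup\{t_1\})$ and $\conv(S_c\cup\{t_2\})$ each do not (take $t_1,t_2$ on opposite sides of $S_{c'}$). So a cell-by-cell assignment read off from an arrangement of separating lines cannot be correct as stated; the recoloring must be done over convex regions, one color at a time.

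For comparison, the paper closes exactly this gap by induction on the number $m$ of color classes. For each color $c_i$ and each supporting line $h_{ij}$ of $\conv(S_i)$, it considers the set $\mathcal{C}^c_{ij}$ of colors whose classes lie in the half-plane $H^+_{ij}$ containing $S_i$ and are disjoint from $\conv(S_i)$; an extremal argument (minimizing $|\mathcal{C}^c_{ij}|$ over all colors and supporting lines) shows some pair $(i,j)$ has $\mathcal{C}^c_{ij}=\emptyset$. One then deletes color $c_i$, extends the remaining $m-1$ colors by induction, and recolors all uncolored points of $H^+_{ij}$ with $c_i$; since $H^+_{ij}$ is convex and every class contained in it already meets $\conv(S_i)$, no new edge is created. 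Your write-up would need this (or an equivalent) argument spelled out; as it stands the proof is incomplete at its only nontrivial step.
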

\begin{proof}
Suppose that a simplicial complex $\mathcal{K}$ is partition-induced in a set of $n$ points in convex position in the plane. In order to show that $\mathcal{K}$ is $2$-Tverberg, we need to show that it is partition-induced in an arbitrary set of points with sufficiently many points.  By the Erd\H{o}s-Szekeres Theorem\cite{Erdos1987}, every point set $\bar{S}$ with sufficiently many points contains a subset ${S}$ of $n$ points in convex position. By hypothesis, there is a partition ${P}$ of $S$ into color classes ${S_1},{S_2},\dots {S_m}$ obtained by a coloring function $\mathcal{C}\colon S\mapsto \{c_1,c_2,\dots ,c_m\}$  such that its nerve $\mathcal{N}({P})=\mathcal{K}$. (Furthermore such a partition yields a polygon circle graph and by Proposition \ref{Polygongraphis2-representable}, it is a $2$-word-representable graph.) It remains to show that we can extend $\mathcal{C}$ to the set $\bar{S}$ with a new coloring $\bar{\mathcal{C}}$ as in Definition \ref{defextendable}.

We  proceed by induction on the number $m$ of different chromatic classes of $\mathcal{C}$.
If there are only two different colors, say $c_1$ and $c_2$,  then two possible scenarios may occur: $\conv(S_1)\cap \conv(S_2)=\emptyset$ or $\conv(S_1)\cap \conv(S_2)\not=\emptyset$. If the first case happens, it is clear that there exists a support line $h$ through one edge of the polygon $\conv(S_1)$ that leaves $\conv(S_1)$ on one side of $h$, say $H^+$,  and $\conv(S_2)$ on the other, $H^-$. Next, we color the points in $\bar{S}\subset H^+$ with color $c_1$ and the points in $\bar{S}\subset H^-$ with color $c_2$.  If $\conv(S_1)\cap \conv(S_2)\not=\emptyset$, then we may color every point in $\bar{S}\setminus S_1$ with color $c_2$.  

Given a color $c_i\in \{c_1,c_2,\dots ,c_m\}$ and a supporting line $h_{ij}$ of $\conv(S_i)$, denote by $H^+_{ij}$ the half-plane that contains $S_i$ (if $S_i$ consists of an interval take either side) and denote by  
 $\mathcal{C}_{ij}^{c}=\{c_{ij_1},c_{ij_2},\dots ,c_{ij_{n_{ij}}}\}$ the set of colors such that the corresponding sets $S_{ij_k}$ are in $H^+_{ij}$ and  
 $\conv(S_i)\cap \conv(S_{ij_k})=\emptyset$ for $k=1,\dots ,n_{ij}$. 
 

Note that if  $|\mathcal{C}_{ij}^{c}|=0$ for some $i$ and some $j$, then we can ignore the points of $S$ having color $c_i$, and by the induction hypothesis there exists an $m-1$ coloring on the remaining points that induce $G$ as a nerve. Now we may re-color all the points in $\bar{S}\setminus S$ that lie in $H_{ij}^+$ with color $c_i$ preserving $G$ as a nerve.

%
%
%

For every color $c_i$, we may suppose that for every supporting line $h_{ij}$ we have that $|\mathcal{C}_{ij}^c|\not=0$. 
Assume that $c_1$ is the color and $h_{11}$ the  supporting line of $conv(S_1)$ such that  $|\mathcal{C}_{11}^c|$ is minimum for all $i=1,\dots ,m$. We now observe that there is a color, say $c_{r}\in \mathcal{C}_{11}^{c}$, and  therefore it is in $H_{11}^+$ and a supporting line $h_{rs}$ such that $|\mathcal{C}_{rs}^{c}|\leq |\mathcal{C}_{11}^{c}|$, contradicting the minimality of $|\mathcal{C}_1^c|$. %
%
\end{proof}

\textbf{Proof of Theorem \ref{mainTheo1} and Theorem \ref{maintheo2circlegraph}}
\smallskip




Although it is possibly true that all general $2$-word-representable graphs are $2$-Tverberg, as it was pointed out in Lemma 2 and Fig. 2 of  \cite{TverbergTipeTheorems}, dealing with simplicial complexes that contain triangles is hard, simply because order types do not register the difference between a $2$-simplex and a triangle. 

By Proposition \ref{Polygongraphis2-representable}, we know that  for every general $2$-word-representable graph $G=(V,E)$, $|V|=m$, there exists an  arrangement of polygons $Q_1,Q_2,\dots ,Q_{m}$ on a circumference that induces $G$ as an intersection graph.  Furthermore,  the $1$-skeleton of every triangle-free graph 
satisfies that it is equal to the intersection graph of its polygon arrangement. Then Theorem \ref{2-extension} yields Theorem \ref{mainTheo1}. 
This implies in particular that $m$-cycles, for $m\geq 3$, are $2$-Tverberg.  

\begin{observation}\label{cord_arragement}
Given a \emph{circle graph} $G=(V,E)$; that is, the intersection graph of a set of chords with vertices on a circle, it is not difficult to see that for a large enough set of points in convex position we can choose $2|V|$ of them and construct a chord arrangement without triple chord intersections such that it induces $G$ as a circular graph.  
\end{observation}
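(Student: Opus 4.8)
The plan is to transfer the defining chord representation of $G$ directly onto the convex-position point set, exploiting the fact that crossing of chords in a convex polygon is governed by exactly the same cyclic alternation condition as crossing of chords inscribed in a circle. Since $G=(V,E)$ is a circle graph, with $m=|V|$, fix a chord diagram realizing it: a cyclic arrangement of $2m$ distinct endpoints, where each vertex $v\in V$ is assigned a pair of endpoints and two vertices are adjacent in $G$ precisely when their pairs alternate cyclically. Equivalently, this is a double-occurrence word $w$ of length $2m$ over the alphabet $V$ in which $x$ and $y$ alternate if and only if $\{x,y\}\in E$, i.e.\ the $2$-word-representant of $G$ provided by Lemma \ref{l4}.

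First I would select $2m$ points from the large convex-position set and list them in the cyclic order in which they appear on the convex polygon, say $p_1,p_2,\dots ,p_{2m}$. Labeling $p_i$ by the $i$-th letter $w(i)$ assigns to each vertex $v$ a pair of points, which I join by the segment (chord) $\chi_v$; since every $p_i$ carries a single label, the $m$ chords are pairwise vertex-disjoint. The key step is the standard convex-position crossing criterion: for points in convex position the chords $\chi_x$ and $\chi_y$ meet in the interior of the polygon if and only if their four endpoints alternate in the cyclic order $p_1,\dots ,p_{2m}$. As this alternation is exactly the alternation of $x$ and $y$ in $w$, the intersection graph of $\{\chi_v : v\in V\}$ coincides with $G$, independently of the actual metric positions of the chosen points.

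The one point that needs care — and the only real obstacle — is the requirement that no three chords be concurrent, so that the arrangement is a genuine simple chord diagram inducing $G$ and nothing more (this is exactly what forces the associated family of segments to have $G$, with no higher faces, as its nerve, which is what lets Theorem \ref{2-extension} apply). Three of the pairwise vertex-disjoint chords passing through a common point involves six distinct endpoints and is a codimension-one algebraic condition on their coordinates, so for a generic convex-position set it never occurs; the role of taking the ambient set large is precisely to supply enough freedom to pick the $2m$ endpoints while avoiding the finitely many concurrence loci determined by the fixed pairing pattern of $w$. I would make this precise by a greedy/counting selection: each potential triple concurrence constrains only a bounded number of selected points, so once the convex-position set is large enough one can realize the cyclic order of $w$ on $2m$ of its points while avoiding every triple concurrence. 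This produces a chord arrangement without triple intersections whose intersection graph is $G$, as claimed.
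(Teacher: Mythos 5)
Your argument is correct and is exactly what the paper intends: the observation is left unproved in the text, and your transfer of the double-occurrence word from Lemma \ref{l4} onto $2|V|$ points in convex position (crossing of chords $=$ cyclic alternation of labels), plus the greedy/genericity selection to avoid triple concurrences using the largeness of the ambient set, supplies precisely the missing details. Nothing in your route diverges from the paper's approach.
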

Then by  Theorem \ref{2-extension}, it is clear that the family of \emph{circle graphs} are $2$-Tverberg, which proves Theorem \ref{maintheo2circlegraph}. This implies that families of graphs such as outerplanar graphs, trees and cycles  \cite[pp.~207-210]{sachs1985graphs} are $2$-Tverberg.

\subsection{Extensions in $\mathbb{R}^d$}

As we observed in the previous subsection, Theorem \ref{2-extension} allows us to extend any  given partition  on a sufficiently large set of points $S$ in convex position in the plane to an arbitrary set of points $\bar{S}$ containing $S$, preserving the nerve of the partition in $S$. This theorem uses the fact that a color $c$ and a  line $h$  always exist, leaving all the points of color $c$ on one half-plane together with all possible points such that their color classes intersect with color $c$.

%
%
Unfortunately this is not always the case in higher dimensions. Consider, for example, a set $S=\{x (t_i)\colon 1 \leq i \leq 9 \text{ with } t_1<t_2<\cdots <t_9\}$ and $\mathcal{C}\colon S\mapsto \{r, b, g \}$ such that $\mathcal{C}\{x (t_1), x (t_2), x (t_6) \} = b$, $\mathcal{C} \{x(t_3), x(t_5), x (t_7) \} = r $ and $ \mathcal{C} \{x (t_4), x (t_8), x (t_9) \} = g $ (see Figure \ref{tresciclico}).  

\begin{figure}[h!]
\centering
\includegraphics[scale=.32]{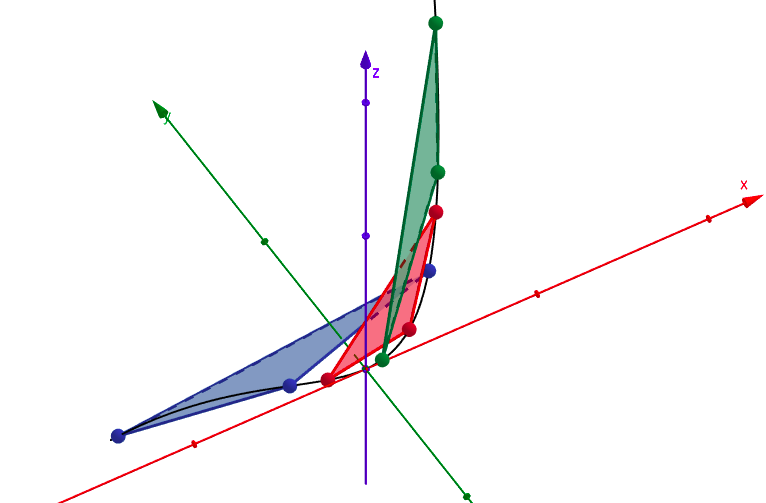}
\caption{Convex hulls of the chromatic classes of $S$.}\label{tresciclico}
\end{figure}

\subsection{Bipartite graphs as general $k$-word-representable graphs}\label{Bipartite graphs}

Although we know that bipartite graphs are $k$-word-representable graphs \cite{WordsandGraphsBook} there is still no specific word description.  In this section, we will prove that bipartite graphs are general $d$-word-representable graphs for some $d$. This implies,  from Theorem \ref{sin_triangulos_encaje_Md}  that bipartite graphs are  a partition induced on a set $S$ with as many points as the length of the 
corresponding word in the moment curve $M_d$. Having the great advantage that such partition  is extendable, will later on allow us to prove Theorem \ref{maintheo3bipartition}.

\begin{theorem}\label{TodaGsepuede}
Every bipartite graph $G=(U\cup V, E)$ is general $d$-word-representable for $d\geq |V|$.
\end{theorem}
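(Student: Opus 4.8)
The plan is to exhibit an explicit word that general $|V|$-word-represents $G$; by Proposition \ref{d is d+1} this instantly upgrades to general $d$-word-representability for every $d\ge|V|$. Write $n=|V|$, $V=\{v_1,\dots,v_n\}$, and fix an integer $t$ with $2t+1\ge n+2$. I would build $W$ as a concatenation of blocks $W=B_1B_2\cdots B_n$, one per vertex of $V$, where $B_i$ carries \emph{all} occurrences of $v_i$ and is itself a concatenation of sub-blocks of the form $(v_i\,u)^t$ as $u$ ranges over the neighbours $u\in N(v_i)\subseteq U$ (in an order to be specified below), terminated by a single trailing $v_i$; any isolated vertex of $U$ is appended once at the very end. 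The point of grouping all copies of $v_i$ into one block, and of giving each neighbour its own contiguous sub-block, is to make most intersection patterns transparent.

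Next I would check the easy cases, which already give the correct ``if and only if'' on all pairs except two vertices of $U$. For an edge $\{v_i,u\}$, the sub-block $(v_i\,u)^t$ forces the restriction of $W$ to $\{v_i,u\}$ to contain the alternating factor $v_i u v_i u\cdots v_i$ of length $2t+1\ge n+2$, so $v_i$ and $u$ are $n$-intersecting. For two vertices $v_i,v_{i'}$ of $V$, all copies of $v_i$ lie in $B_i$ and all copies of $v_{i'}$ in $B_{i'}$, so their restriction is $v_i^{*}v_{i'}^{*}$ and the longest alternating subword has length $2$. For a non-edge $\{v_i,u\}$ with $u\notin N(v_i)$, the letter $u$ never appears inside $B_i$, so all copies of $v_i$ form one contiguous run when restricted to $\{v_i,u\}$, giving a word of shape $u^{*}v_i^{*}u^{*}$ and alternation at most $3\le n+1$ (the cases $n\le 1$ being trivial). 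In each instance the alternation length lands strictly below $n+2$ exactly when there is no edge.

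The remaining case — a pair $u,u'\in U$, necessarily a non-edge since $G$ is bipartite — is the genuinely delicate one, and is where I expect the real work to be. Both $u$ and $u'$ occur only inside the blocks indexed by their \emph{common} neighbours $v_{i_1},\dots,v_{i_c}$ (so $c\le n$), and inside each such block the restriction to $\{u,u'\}$ is a double run $u^t u'^t$ or $u'^t u^t$ according to which neighbour is listed first. If the listing order were the same in every block, these double runs would concatenate into $2c$ maximal runs, giving an alternating subword of length $2c$, which can reach $2n>n+1$ and thereby create a spurious edge. The fix is to order the neighbours inside the blocks so that, for \emph{every} pair $u,u'$, their relative order is reversed between any two \emph{consecutive} blocks in which they both occur. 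Under such a scheme the $c$ double runs alternate in orientation, collapse to exactly $c+1$ maximal runs after merging, and hence yield longest alternating subword $c+1\le n+1<n+2$, so $u,u'$ are not $n$-intersecting.

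The main obstacle is thus to guarantee this simultaneous per-pair reversal. A naive boustrophedon that simply reverses the neighbour list by block parity fails, because a pair whose common-neighbour blocks do not have alternating parities would be listed consistently; and reversing only the positions occupied by $N(v_i)$ inside a running total order can flip a pair sharing just one of its two endpoints, destroying the ``reverse exactly at common neighbours'' invariant. The plan is therefore to prove a combinatorial ordering lemma: there is an assignment of linear orders $\sigma_i$ to the blocks so that for every pair $u,u'\in U$ the induced orientations alternate along the subsequence of blocks containing both. I expect to establish this by maintaining a running order on $U$ and reversing, after processing $v_i$, precisely the mutual order of the vertices of $N(v_i)$ while leaving the order of every pair not contained in $N(v_i)$ untouched, then verifying the invariant by induction on the blocks. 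Showing that such an order-maintenance scheme exists is exactly where the bound $d\ge|V|$ is spent, and it is the step I would expect to be the crux of the argument.
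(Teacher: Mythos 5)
Your word is essentially the paper's word: one block per vertex of $V$, each block a concatenation of long two‑letter alternating factors, one factor per neighbour in $U$; the only difference is how the neighbours are ordered inside the blocks (the paper uses a plain boustrophedon, increasing order in odd blocks and decreasing in even ones). The genuine gap is in your analysis of a non‑adjacent pair $u,u'\in U$. You analyse the restriction of $W$ to $\{u,u'\}$ as if it consisted only of the $c$ double runs coming from blocks of \emph{common} neighbours, but every block $B_i$ with exactly one of $u,u'$ in $N(v_i)$ also contributes a single run, and these runs sit between your double runs and destroy the merging you rely on. Concretely, take $|V|=3$, $u$ adjacent to $v_1,v_2,v_3$ and $u'$ adjacent to $v_1,v_3$ only. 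Your per‑pair reversal forces blocks $1$ and $3$ to carry opposite orientations of $\{u,u'\}$, while block $2$ contributes a run of $u$; the restriction then has run pattern $u,u',u,u',u$, i.e.\ five maximal runs, so it contains an alternating subword of length $5=|V|+2$ and your word creates the spurious edge $\{u,u'\}$. Thus the scheme you propose as the ``fix'' is the one that fails, whereas the ``naive boustrophedon'' you reject is exactly the paper's choice and does work: if two common‑neighbour blocks have the same parity they cannot be adjacent, and a short parity count shows that the blocks strictly between two consecutive common‑neighbour blocks, together with the second of them, contribute at most one new maximal run per block, so the restriction to $\{u,u'\}$ has at most $|V|+1$ runs (in the example above the paper's word yields run pattern $u,u',u,u'$, only four runs). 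Finally, even apart from the counterexample, you explicitly leave the ordering lemma unproven and call it the crux, so the argument is incomplete as written; the rest of your case analysis ($V$--$V$ pairs, non‑adjacent $V$--$U$ pairs, and genuine edges) matches the paper and is fine.
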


\begin{proof}
We start to labeling the vertices of $G$ as $V=\{v_1,v_2,...,v_{d} \}$ and $U=\{u_1,u_2,...,u_{m} \}$ with $d\leq m$. For every pair of vertices $v_i \in V$ and $u_j\in U$, $1\leq i\leq d$ and $1\leq j\leq m$ we define the word $F_i({u_j})$ as an alternating sequence of length $d+2$ with letters $\{v_i,u_j\}$ if $\{v_i,u_j\}\in E$, and as the empty word if $\{v_i,u_j\}\notin E$. 
Next define the word $W_i$ as follows.


\[
  W_{i} = 
  \left \{
    \begin{aligned}
      & F_i({u_{1}})F_i(u_{2})...F_i(u_{m})& \text{ when } i \text{ odd}\\
      & F_i(u_{m})F_i(u_{m-1})...F_i(u_{1})& \text{ when } i \text{ even}
    \end{aligned}
  \right .
\]

Now we claim that $W=W_1W_2....W_{d}$ general $d$-word-represents $G$.

By construction  it is clear that for every edge $e\in E(G)$ there is an alternating sequence of length $d+2$ in $W$ with alphabet in $e$, then every edge of $G$ is induced by $W$. 
Now, we will prove that $W$ does not induce further edges. Consider two different vertices $v_p,v_q\in V$, since these vertices are not adjacent they appear in different factors $W_p$ and $W_q$. Then, note that every alternating sequence with maximum length in $W$ with letters $\{v_p,v_q\}$ has at most length $2$. Therefore $W$ does not induce the edge $\{v_p,v_q\}$. \\
For $\{u_p,u_q\}\notin E$ with $u_p,u_q\in U$, by construction every $W_i$ 
contributes at most with a copy of $u_p$ and at most one of $u_q$ to every alternating sequence with letters $\{u_p,u_q\}$. Furthermore, these elements are ordered in the reverse order in any two consecutive factors $W_i$'s, then every alternating sequence has at most length $n+1$ and $W$ does not induce the edge $\{u_p,u_q\}$.

Finally, given an edge $\{v_p,u_q\}\notin E(G)$ with $v_p\in V$ and $u_q\in U$, by construction there exist a factor $W^*\supseteq W_p$ in $W$ containing all the copies of $v_p$ and only other letters for which there are alternating subsequences of size $d+2 $ with $v_p$. Then, $u_q \notin W^*$ and the maximum length  of an alternating sequence with these two letters in $W$ has a size of at most $3$.
We conclude that $W$ general $d$-word-represents $G$. 
\end{proof}

\subsection{General-bipartite graphs  are \emph{k}-Tverberg complexes}

We begin with the following useful Lemma that follows directly from Gales's evenness condition Theorem \ref{Gale}. 

\begin{lemma}\label{separation}
Every hyperplane $H$ generated by  $d$ points on the moment curve $M_d$,  $H=\langle  \{ p_i \} \rangle := \langle \{p_i =x(l_i)\in M_d\colon l_i \in \mathbb{R} \text{ such that } l_1<l_2< \cdots <l_d\} \rangle$ divides $\mathbb{R}^d$ into two half-hyperplanes one of them, say $H^+$  containing all points lying in $R_i$ for even $i$ and the other $H^-$ containing all points lying in $R_i$ for odd $i$. Where $R_0=\{x(l)\colon l<l_1\}$, $R_i=\{x(l)\colon l_{i-1}<l<l_i\}$ with $1<i\leq d$ and $R_{d+1}=\{x(l)\colon l>l_d\}$.
\end{lemma}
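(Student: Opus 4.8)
The plan is to reduce the whole geometric statement to the sign behaviour of a single univariate polynomial. First I would write the hyperplane $H$ in the form $H=\{x\in\mathbb{R}^d\colon \langle a,x\rangle = b\}$ for some pair $(a,b)$ with $a\neq 0$, so that the two open half-spaces are $H^+=\{\langle a,x\rangle > b\}$ and $H^-=\{\langle a,x\rangle < b\}$. Evaluating along the moment curve $x(t)=(t,t^2,\dots,t^d)$, I introduce the polynomial
\[
f(t)=\langle a,x(t)\rangle - b = a_d t^d + a_{d-1}t^{d-1}+\cdots + a_1 t - b,
\]
which has degree at most $d$. Since $x(t)$ lies in $H^+$, on $H$, or in $H^-$ according to whether $f(t)$ is positive, zero, or negative, the entire lemma becomes a statement about the sign of $f$ on the intervals of $\mathbb{R}$ cut out by $l_1<\cdots<l_d$.

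Next I would pin down the shape of $f$. Because $p_1,\dots,p_d\in H$, the reals $l_1,\dots,l_d$ are $d$ distinct roots of $f$. A nonzero polynomial of degree at most $d$ has at most $d$ roots, and $f$ cannot be identically zero, since $f\equiv 0$ would force $a=0$ and $b=0$, contradicting $a\neq 0$. Hence $f$ has degree exactly $d$, the $l_j$ are precisely its (simple) roots, and
\[
f(t)=a_d\prod_{j=1}^{d}(t-l_j),\qquad a_d\neq 0.
\]
As a byproduct this same count shows that the $d$ points genuinely span a hyperplane: the space of degree-$\le d$ polynomials vanishing at the $d$ distinct points $l_j$ is one-dimensional, spanned by $\prod_{j}(t-l_j)$, so $(a,b)$ is unique up to scaling and always has $a\neq 0$.

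From the factorization the conclusion is immediate. On each open interval of $\mathbb{R}\setminus\{l_1,\dots,l_d\}$ the product $\prod_j(t-l_j)$ keeps a constant sign, and because every root is simple the sign flips each time $t$ crosses some $l_j$. Therefore the arcs of the moment curve delimited by the $l_j$, together with the two unbounded arcs $R_0$ and $R_{d+1}$, lie \emph{alternately} in $H^+$ and $H^-$; concretely, the side occupied by a given arc is determined by the parity of the number of the $l_j$ lying to its right, equivalently by the parity of its index among the regions $R_i$. Declaring $H^+$ to be the half-space that collects the even-indexed arcs then yields exactly the stated dichotomy. I would close by noting that this is the mechanism underlying Gale's evenness condition (Theorem \ref{Gale}): the case where $H$ is a supporting hyperplane of $C(x(T))$ is precisely the case where all the remaining curve points fall on a single side, i.e.\ the complement is evenly separated.

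The step needing the most care is not any single hard argument but the bookkeeping: one must verify that $f$ has degree exactly $d$ (so that the sign genuinely alternates and no root is doubled), and then align the parity of the region labels $R_i$ with the parity of $\prod_j(t-l_j)$ and with the chosen orientation of $H^+$ versus $H^-$. In particular the index convention for the leftmost arc should be fixed so that spatially consecutive arcs receive opposite-parity indices, which is the only delicate point in matching the clean invariant ``same parity of index $\iff$ same side'' to the explicit regions $R_i$.
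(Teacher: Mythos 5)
Your proof is correct, and it is more self-contained than what the paper offers: the paper gives no argument at all for this lemma, merely asserting that it ``follows directly from Gale's evenness condition'' (Theorem \ref{Gale}). Your route --- reducing everything to the sign of the degree-$d$ polynomial $f(t)=\langle a,x(t)\rangle-b$, observing that the $l_j$ are its $d$ simple roots so that $f=a_d\prod_j(t-l_j)$, and reading off the strict alternation of sign across consecutive arcs --- is the standard underlying mechanism and in fact \emph{proves} Gale's evenness condition rather than invoking it, so it buys a genuinely elementary and complete argument where the paper has only a citation. Your closing remark about bookkeeping is also well taken: as literally stated, the lemma's indexing ($R_0=\{l<l_1\}$, $R_i=\{l_{i-1}<l<l_i\}$ only for $1<i\leq d$, $R_{d+1}=\{l>l_d\}$) makes the spatially adjacent arcs $R_0$ and $R_2$ both even-indexed, which contradicts the alternation your argument establishes; the labels need to be shifted so that consecutive arcs get consecutive indices (e.g.\ $R_i=\{l_i<l<l_{i+1}\}$), and you correctly identified this as the one point requiring repair. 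The only step worth stating a touch more carefully is the claim that $a_d\neq 0$: it follows because a nonzero polynomial of degree strictly less than $d$ cannot have $d$ distinct roots, which you do implicitly use; making that one sentence explicit closes the argument completely.
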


Let $G=(U\cup V,E)$ be a bipartite graph with $V=\{v_1,v_2,...,v_{d}\}$ and $U=\{u_1,u_2,...,u_{m}\}$ with $d\leq m$. Let $W=W_1W_2...W_{d}$ be the word that general $d$-word-represents $G$  constructed as described in the Theorem \ref{TodaGsepuede} with $d\geq|V|$. Consider a point set $S=\{s_i=x(t_i): t_1<t_2<...<t_{|W|} \in \mathbb{R}\}$ of $|W|$ points on the moment curve $M_d$. Then, by Theorem \ref{sin_triangulos_encaje_Md} the coloring $\mathcal{C}_{G}:S\mapsto V$ given by $\mathcal{C}_G(s_i)=W(i)$ induce a partition $P_G$ such that $\mathcal{N}(P_G)=G$. 

\begin{lemma}\label{extendablebiparite}
Given any set of points $\bar{S}$ such that $S\subset \bar{S}\subset \mathbb{R}^d$, $\mathcal{C}_G$ is extendable in such a way that
$\mathcal{N}(\bar{P}_G)=G$. 
\end{lemma}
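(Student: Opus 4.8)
The plan is to reduce the statement to a purely combinatorial separation problem and then solve it with Lemma \ref{separation}. First I would record the observation that, since $G=(U\cup V,E)$ is bipartite it is triangle-free, so exactly as in the proof of Theorem \ref{sin_triangulos_encaje_Md} the nerve $\mathcal{N}(P_G)$ has no faces of dimension $\geq 2$ and is therefore completely determined by the list of pairs of colour classes whose convex hulls meet. Because enlarging a colour class can only enlarge its convex hull, every edge already present in $\mathcal{N}(P_G)=G$ survives any extension of $\mathcal{C}_G$; hence it suffices to colour the points of $\bar{S}\setminus S$ so that every class stays non-empty and no new intersection between two \emph{non-adjacent} classes is created. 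In other words, the whole problem becomes: distribute the extra points among the colours without producing a single spurious edge.

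Next I would exploit the block structure of the word $W=W_1\cdots W_d$ from Theorem \ref{TodaGsepuede}. Each colour $v_i\in V$ occurs only inside the factor $W_i$, so the points of $S$ coloured $v_i$ are confined to the arc of $M_d$ occupied by $W_i$, and the reversal of the order of the $u_j$'s in consecutive factors is exactly what forbids the $v$--$v$ and $u$--$u$ edges. The key geometric input is Lemma \ref{separation}: a hyperplane spanned by $d$ points of $M_d$ splits the remaining points of the curve according to the parity of the gap they fall into. I would use such hyperplanes to produce, for each colour $c$, a supporting hyperplane $h_c$ of $\conv(c)$ having $\conv(c)$ together with the hulls of all of its neighbours on one side $H_c^{+}$ and the hulls of all of its non-neighbours strictly on the other; for the $v$-colours this should follow from their localisation in a single block together with Gale's evenness condition (Theorem \ref{Gale}), and for the $u$-colours from the alternation bound already established in Theorem \ref{TodaGsepuede}.

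With these hyperplanes in hand I would run the same minimality induction as in the proof of Theorem \ref{2-extension}, now in $\mathbb{R}^d$: on the number of colour classes, among all colours and all of their supporting hyperplanes pick the pair $(c,h_c)$ for which the number of non-adjacent classes landing in $H_c^{+}$ is smallest, argue by the same contradiction that this minimum must be zero, and then recolour every point of $\bar{S}\setminus S$ lying in $H_c^{+}$ with colour $c$, which enlarges $\conv(c)$ only inside a region that meets no non-neighbour. Deleting $c$ and its incident edges reduces the number of colours, the induction hypothesis colours the remaining points, and restoring $c$ completes an extension with $\mathcal{N}(\bar{P}_G)=G$, which is precisely what Definition \ref{defextendable} demands.

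The step I expect to be the main obstacle is exactly the construction of the separating hyperplanes $h_c$ in dimension $d\geq 3$: as the configuration in Figure \ref{tresciclico} shows, it is in general \emph{false} that a colour and all the colours meeting it can be cut off from the rest by a single hyperplane, so the argument of Theorem \ref{2-extension} does not transfer verbatim. The whole weight of the proof therefore rests on showing that the very special block-word geometry of Theorem \ref{TodaGsepuede} --- each $v_i$ trapped in one factor and the even/odd reversal of the $u_j$'s --- rules out the cyclic obstruction of Figure \ref{tresciclico} and restores the existence of $h_c$; verifying this via Breen's theorem (Theorem \ref{Breen_theorem}) and Gale's evenness condition is the technical heart of the argument.
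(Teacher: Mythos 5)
Your opening reduction is sound: since $G$ is triangle-free the nerve is determined by its $1$-skeleton, and existing intersections survive any enlargement of the colour classes, so the task is exactly to distribute $\bar S\setminus S$ without creating a spurious edge. But the proof stops where the real work begins. You yourself flag that the existence, for each colour $c$, of a supporting hyperplane $h_c$ with $\conv(c)$ and all of its neighbours on one side and all non-neighbours strictly on the other is the ``technical heart,'' and that the configuration of Figure \ref{tresciclico} shows such separation can fail in $\mathbb{R}^d$ for $d\geq 3$; you then assert, without argument, that the block structure of $W$ restores it. That assertion is never verified, so the proposal is a plan rather than a proof. Moreover the object you ask for is stronger than anything used even in the plane: the proof of Theorem \ref{2-extension} only needs \emph{one} colour and \emph{one} supporting line whose positive side contains no non-neighbouring class (neighbours may lie on either side), and the minimality induction you propose to rerun is precisely the part of that argument the authors do \emph{not} attempt to transfer to higher dimensions.

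The paper's actual argument is different and avoids per-colour separating hyperplanes and induction on the number of classes altogether. Because consecutive factors $W_i$ enumerate the $u_j$ in opposite orders, all occurrences of a fixed $u_j$ sit in at most $\lfloor d/2\rfloor+1$ clusters along the moment curve, located at the junctions of consecutive blocks. One can therefore insert $d$ auxiliary points of $M_d$ (``separators'') in the gaps around these clusters, and by Lemma \ref{separation} the hyperplane they span places exactly the points belonging to the factors $F_i(u_j)$ --- the points coloured $u_j$ together with the copies of the adjacent $v_i$'s occurring there --- on one side. Iterating this yields a decomposition of $\mathbb{R}^d$ into $m$ convex regions $R_1,\dots,R_m$, one per $u_j\in U$, and every point of $\bar S\setminus S$ lying in $R_j$ is coloured $u_j$; no new point ever receives a $V$-colour. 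Correctness follows because the enlarged $\conv(u_j)$ stays inside the convex region $R_j$, while a class $v\in V$ not adjacent to $u_j$ has no points in $R_j$. To salvage your outline you would need either to carry out your hyperplane construction explicitly (which I doubt is possible in the strong form you state) or to switch to this region-decomposition strategy, which is where the clustering of each $u_j$ into few blocks --- the feature you correctly identified as essential --- is actually used.
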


 \begin{proof} 
 
Let us begin by looking closely to the $W=W_1W_2...W_{d}$ described in Theorem \ref{TodaGsepuede}.  

\small{
$W=\underbrace{F_1({u_{1}})F_1(u_{2})\dots F_1(u_{m})}_{W_1}
       \underbrace{F_2(u_{m})F_2(u_{m-1})\dots F_2(u_{1})}_{W_2} F_3(u_1) \cdots F_{d-1}({u_{1}}) \underbrace{F_{d}({u_{1}})F_{d}(u_{2})...F_{d}(u_{m})}_{W_{d}}
$} if $d$ is odd, or

\small{
$W=\underbrace{F_1({u_{1}})F_1(u_{2})\dots F_1(u_{m})}_{W_1}
      \underbrace{F_2(u_{m})F_2(u_{m-1})\dots F_2(u_{1})}_{W_2} F_3(u_1) \cdots F_{d-1}(u_m) \underbrace{F_{d}(u_{m})\dots F_{d}(u_{2})F_{d}(u_{1})}_{W_{d}}
$}

if $d$ is even. 

Since some of the factors $F_i(u_{1})$ may be empty for some $1\leq i\leq d$. We begin by observing that the factors that contain the letter $u_1$ can be 
grouped in most $\lfloor\frac{d}{2}\rfloor +1 $  subwords of length one or two.
This observation will allow us to insert $d$ ``separators"  $\bf{p_1,p_2,\dots ,p_d}$ that isolate these $\lfloor\frac{d}{2}\rfloor +1 $ subwords containing letter $u_1$, as follows:

$\tilde{W}=\underbrace{F_1({u_{1}}){\bf{p_1}}F_1(u_{2})\dots F_1(u_{m})}_{W_1}
       \underbrace{F_2(u_{m})F_2(u_{m-1})\dots {\bf{p_2}} F_2(u_{1})}_{W_2}  F_3(u_1){\bf{p_3}}\dots$ 
\begin{flushright}
$\dots \hspace{.5cm} {\bf{p_{d-1}}}F_{d-1}({u_{1}}) \underbrace{F_{d}({u_{1}}) {\bf{p_{d}}} F_{d}(u_{2})...F_{d}(u_{m})}_{W_{d}}
$ if $d$ is odd, or 
\end{flushright}       
 
$ \tilde{W}=\underbrace{F_1({u_{1}}){\bf{p_1}}F_1(u_{2})\dots F_1(u_{m})}_{W_1}
       \underbrace{F_2(u_{m})F_2(u_{m-1})\dots {\bf{p_2}} F_2(u_{1})}_{W_2}  F_3(u_1){\bf{p_3}}\dots$
\begin{flushright}
$\dots \hspace{1.7cm}  \underbrace{F_{d}(u_{m})\dots F_{d}(u_{2}){\bf{p_{d}}}F_{d}(u_{1})}_{W_{d}}
$ if $d$ is even. 
\end{flushright}

Now, these separators will not be part of $S$ and therefore will not be colored by $\mathcal{C}_G$. 
The aim of these separators is only to choose a point in $M_d\setminus S$  in the corresponding interval between the two original points in $S$. 
That is, if  $W=\dots W(k){\bf{p_r}}W(k+1)\dots $ then we may chose any $t$ such that $t_k<t<t_{k+1}$ and  $p_r:=x(t)$ for $1\leq r\leq d$. 
This will allow us to choose $d$ points $\{p_1,p_2,\dots ,p_d\}$ at the moment curve $M_d$. 
By Lemma \ref{separation}, there exists a hyperplane $H_1=\langle  p_i \rangle$, containing no points of $\overline {S}$, that divides $\mathbb{R}^d$ in two half-planes $H_1^+$ and $H_1^-$. This is done so that the points of $S$, that are related with a non-empty factor of the form $F_i(u_1)$ colored with colors $u_1$ or $v_i$, lie on one of the half-planes, say $H_1^+$, and every other point in $S$ lies in $H_1^-$. 
Similarly, for color $u_2$ we can define a new hyperplane $H_2$ and a convex region $R_2= (\mathbb{R}^d\setminus H_1^+) \cap H_2^+$ such that every point related with the factors of the form $F_i(u_2)$ lie in the interior of this convex region $R_2$.  Recursively, for $3\leq j \leq m-1$ we can define hyperplanes $H_j$ and convex regions $R_j= (\mathbb{R}^d\setminus \cup_{k=1}^j R_{k-1}) \cap H_j^+$ such that every point related with the factors of the form $F_i(u_j)$  lies in the interior of these convex regions. 
Finally we define the region $R_{m}= \mathbb{R}^d \setminus R_{m-1}^+$.\\
Observe that for every point $s\in S$ lying in the interior of the region $R_j$, we have one of two cases: $\mathcal{C}(s)=u_i$ or the convex hull of the points with color $\mathcal{C}(s)$ intersects the convex hull of the points with color $u_j$. 
By construction of the regions $R_j$, every pair of adjacent points of $S$ with color $v\in V$ lie in the same region or in adjacent regions. Then, if $v\notin R_j$ for some $j$, the convex hull of the points with color $v$ does not intersect the convex hull of the points with color $u_j$. 
Coloring the points of $(\bar{S}\setminus S) \cap R_j$ with the color $u_j$ yields that the resulting coloring extension preserves $G$ as a nerve.    
\end{proof}
 
\noindent{\bf Proof of Theorem \ref{maintheo3bipartition} }

\begin{proof} Consider a bipartite graph $G=(U\cup V,E)$ and let  
 $W$ a word of length $|W|$ that general $d$-word represents $G$ for some integer $d$, constructed as described in Theorem \ref{TodaGsepuede}. 
By the multi-dimensional version of Erd\H{o}s-Szekeres theorem (due to Grunbaum\cite{Gbook} and Cordovil and Duchet \cite{CordovilDuchet}, we know that 
every large enough set of points in $\bar{S}\subset \mathbb{R}^d$ contains a set of $|W|$ points $S$ of some curve $\gamma $ that is combinatorially equivalent to a set $S_1$ on the moment curve $M_d$. Then by Lemma 2  of  \cite{TverbergTipeTheorems}   there is a bijection $\sigma$ from $S_1$ to $S$ that 
preserves the orientation of any $(d+1)$-tuple in $S$. Then any partition $\mathcal{P} = (P_1, P_2, \dots, P_n)$ of $S_1$ and the 
corresponding partition of $S$ via $\sigma$, denoted $\sigma{\mathcal{P}} = \{\sigma(P_1), \sigma(P_2), \dots ,\sigma(P_n)\}$,  have 
the same intersection graph $\mathcal{N}^1(\mathcal{P})$.  Since bipartite graphs are triangle-free simplicial complexes, their nerve complexes equal to their $1$-skeleton.
This implies that the partition induced by the word $W$ in  Theorem \ref{sin_triangulos_encaje_Md} is also  partition induced in $S$. Then finally by 
Lemma \ref{extendablebiparite}  $G$ is $d$-Tverberg. 
\end{proof}

\section{Conclusions}

In this paper we have obtained new Ramsey-type Tverberg theorems by showing that some infinite families of graphs (such as cycle graphs and bipartite graphs) are always induced as the nerve of some partition of any sufficiently large set of points in $\mathbb{R}^d$, and we relate $k$-word-representable and general $k$-word-representable graphs to realizations of nerves in convex position in $\mathbb{R}^d$. We believe that it is possible to generalize Theorem \ref{mainTheo1} to every general $2$-word-representable graph without the triangle-free restriction, but at this time it seems difficult to prove.

We observe that if a graph $G$ is general $d$-word-representable and extendable for some integer $d$, then $G$ is $d$-Tverberg. Furthermore, if a graph $G$ is not general $d$-word-representable for a fixed $d$, then $G$ is not partition-induced in $\mathbb{R}^d$. Thus it would be interesting to study how to determine the minimum $d$ for which a certain given graph $G$ is general $d$-Tverberg. This would provide us with a tool to see how to decide if a given graph $G$ is not $(d-1)$-Tverberg. 
We conjecture that the following bipartite graph (see Figure \ref{conjetura}) is not $3$-Tverberg, but even with the assumption that each vertex contributes only  $3$ copies to the possible word, there are $\sum_{i=1} ^ {10}{{3i}\choose{3}}$ words to be checked, which is hard to compute. Then clearly another approach for future work is needed. 

\begin{figure}[h!]
\centering
\includegraphics[scale=.8]{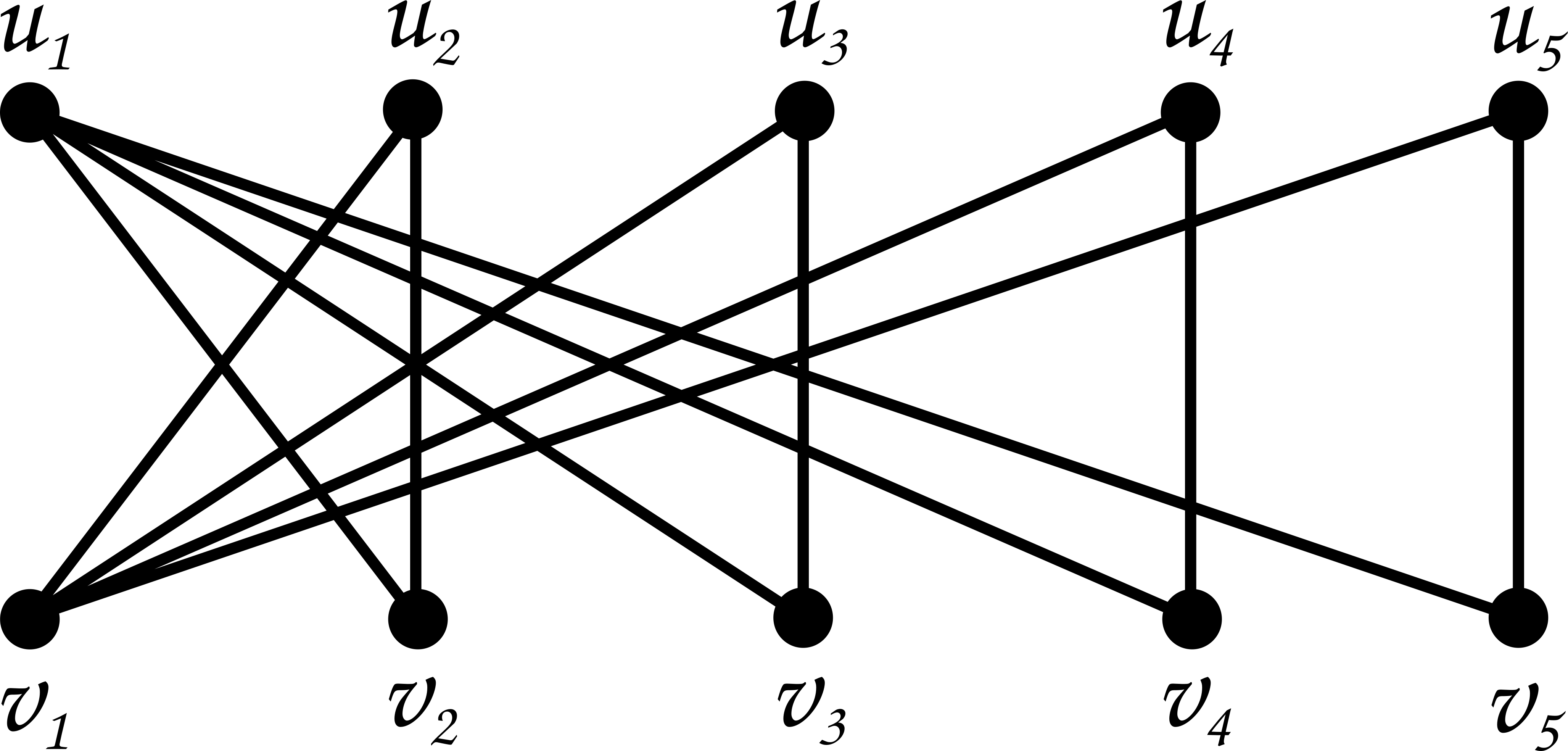}
\caption{Graph that may not be 3-Tverberg.}\label{conjetura}
\end{figure}

\section*{Acknowledgements}
The authors will like to thank the support from grants PAPIIT 104915 and CONACyT 166306.

\bibliographystyle{alpha}
\bibliography{references_bipartite}
\end{document}